\documentclass[11pt, twocolumn]{article}
\usepackage{article}
\author{Torgeir Aamb\o \thanks{Norwegian Defence Research Establishment (FFI)}}
\title{Possibilistic operators in Formal Concept Analysis as Kan extensions}
\date{}

\begin{document}
\maketitle

\begin{abstract}
    \textbf{In this paper we prove that Dubois--Prade's eight possibilistic operators in Formal Concept Analysis arise canonically from Kan extensions of the underlying boolean profunctor. This provides a conceptual explanation for the result that $N\Pi$-pairs are the formal concepts of the complement context. We further prove that the FCA closure operator and the $N\Pi$-pairs are the only symmetric or asymmetric operator compositions that give formal concepts. Finally we use these eight possibilistic operators to construct new closure operators on a formal context via standard categorical arguments.} 
\end{abstract}

\section{Introduction}

Formal concept analysis (FCA) is a technique for structuring data by associated attributes \cite{ganter-wille_1999}, and has during recent years seen a wide array of applications in various contexts \cite{singh-kumar-gani_2016}. Given a list of objects, a list of attributes, and a relation describing which object has which attribute, FCA constructs a set of \emph{formal concepts} which are naturally structured into a complete lattice. 

Formal concepts are constructed as fixedpoints of derivation operations $(-)'$ on the set of objects and attributes. More formally, given a set of objects $G$, a set of attributes $M$ and a relation $R\subseteq G\times M$, one defines for a subset of objects $A\subseteq G$ its \emph{intent}
\[A' = \{m \in M \mid \forall g\in G, (g\in A \implies gRm)\},\]
and similarly for a subset of attributes $B\subseteq M$ its \emph{extent}
\[B' = \{g \in G \mid \forall m \in M, (m\in B \implies gRm)\},\]
where $xRy$ means that the object $x$ has the attribute $y$. In words, the extent of $A$ is the set of common attributes shared by all of its objects, and the intent of $B$ is the set of objects that have all of its attributes. A formal concept is then a subset $(A,B)\subseteq G\times M$ such that $B=A'$ and $A=B'$, see \cite{ganter-wille_1999} for details. 

In \cite{dubois-dupin-prade_2007, dubois-prade_2009}, the authors take inspiration from possibility theory -- see \cite{dubois-prade_1988, dubois-prade_1998, dubois-prade_2015} -- to give a new reading of additional power-set operators on $P(G)$, previously studied in relation to qualitative data analysis \cite{duntsch-gediga_2002} and rough set theory \cite{yao-chen_2006}. 

Denoting the previous FCA operator by $(-)' = (-)^\Delta$, they further study for a subset $A\subseteq G$, the operators 
\[A^\Pi = \{m \in M \mid \exists g\in G, (g\in A \wedge gRm)\},\]
as well as
\[A^\nabla = \{m\in M \mid \exists g\in G, (g\not\in A \wedge g\oR m)\},\]
where $\oR = \neg R$ is the negated relation, and finally
\[A^N = \{m\in M \mid \forall g\in G, (gRm \implies g\in A)\}.\]
These are usually called the potential possibility $\Pi$, the guaranteed necessity $N$, the guaranteed possibility $\Delta$ and the potential necessity $\nabla$. One can similarly define these four operators for subsets $B\subseteq M$, which we denote with subscripts, $(-)_\Pi, (-)_N, (-)_\Delta$ and $(-)_\nabla$. We will for the rest of the paper refer to these as the eight \emph{possibilistic operators}.  

By using a well established categorical framework for FCA via boolean profunctors, the first goal of this paper is to prove that the above eight operators arise canonically from the underlying relation $R$, rather than from possibility-theoretic considerations. More precisely, the relation $R$ and its complement relation $\oR = \neg R$ both canonically define four operations each via right and left Kan extensions of curryings of the boolean profunctor. Together these form four adjoint pairs on $P(G)$ and $P(M)$ -- the powersets of $G$ and $M$. 

Our main result is the following theorem. 

\begin{introthm}
    Any formal context has eight canonical associated functors arising from Kan extensions of the boolean profunctor $R$ and its complement $\oR$. These correspond naturally to the eight possibilistic operators in the following manner:
    \begin{multicols}{2}
    \begin{enumerate}
        \item $(-)^\Delta = \Ran_y(R^-)$
        \item $(-)_\Delta = \Ran_y(R_-)$
        \item $(-)^\Pi = \Lan_y(R^-)$
        \item $(-)_\Pi = \Lan_y(R_-)$
        \item $(-)^N = \Ran_y(\oR^-)\neg$
        \item $(-)_N = \Ran_y(\oR_-)\neg$
        \item $(-)^\nabla = \Lan_y(\oR^-)\neg$
        \item $(-)_\nabla = \Lan_y(\oR_-)\neg$
    \end{enumerate}
    \end{multicols}
    Furthermore, these form adjoint pairs $(-)^\Delta\dashv (-)_\Delta$, $(-)^\Pi\dashv (-)_N$, $(-)_\Pi\dashv (-)^N$ and $(-)_\nabla\dashv (-)^\nabla$. 
\end{introthm}

While the derivation operators have previously been understood categorically, see for example \cite{ferrer_2023}, the remaining six possibilistic operators apparently have not. This categorical viewpoint makes the properties of these operators, as studied in for example \cite{djouadi_prade_2011}, evident from standard categorical arguments. 

By design, fixedpoints of $(-)_\Delta (-)^\Delta : P(G) \to P(G)$ are exactly the extents of the formal concepts of the formal context $(G,M,R)$. In \cite{djouadi_prade_2011, yakub-djouadi-dubois-prade_2016}, the authors prove that the fixedpoints for the asymmetric operator composition $(-)_\Pi (-)^N$, which they call $N\Pi$-pairs, are precisely the formal concepts of the complementary formal context $(G,M,\oR)$ defined using the negated relation $\oR$. Via the categorical viewpoint, this result is given its \emph{raison d'
être}. 

The authors in \cite{yakub-djouadi-dubois-prade_2016} note at the end of the paper that further research should be done to understand the remaining asymmetric operator compositions of the possibilistic operators. We prove that the operators $(-)_\Delta (-)^\Delta$ and $(-)_\Pi (-)^N$ are the only ones that has formal concepts as their fixed points. Along the way we provide some standard categorical tools for studying such compositions, as well as other endofunctors on $P(G)$. In particular, any composition-sequence of the possibilistic left adjoints has a corresponding closure operator, giving a plethora of new types of ``concepts'' one can study.  

\textbf{Main contributions:} The paper provides four new contributions to formal concept analysis. First, it provides a good reason for \emph{why} the eight possibilistic operators are as natural to study as standard FCA closure operators. Second, it provides a coherent framework for several existing facts about possibilistic operators in formal concept analysis, like the fact that $N\Pi$-pairs are the concepts of the complement context. Third, it shows how general machinery can identify new closure operators on formal contexts. Fourth, it sets the stage for further fuzzy, possibilistic, quantale-enriched versions of the same results, all packaged into the same underlying theory of enriched categories.

\section{Category theory and FCA}

Formal concept analysis has attracted interest by category theorists for being a natural and useful application of their abstract tools. Category theory was introduced by Eilenberg and MacLane in \cite{eilenberg-maclane_1945}, and is an abstract mathematical theory for understanding compositional structure -- see \cite{maclane_1998} for a comprehensive introduction. 

A category consists of \emph{objects} and \emph{morphisms}, which are abstract relationships between objects. Category theory can then be thought of as studying objects by understanding how they relate to all other objects of the same defined type. We will assume that the reader is familiar with basic categorical constructions. 

For a category theorist, the objects $G$ and attributes $M$ can be viewed as discrete categories. Any relation can be described as a $\2 = \{0,1\}$-valued profunctor, also called \emph{boolean} profunctors. In our case the profunctor $R\: G\nrightarrow M$ is a functor $G\op\times M\to \2$, which takes a pair $(g,m)$ to $1$ if $g$ has the attribute $m$ -- see \cite{ferrer_2023} for details. This will be our running definition of a formal context. 

\begin{definition}
    A \emph{formal context} $(G,M,R)$ consists of two discrete categories $G$ and $M$ and a boolean profunctor $R\: G\nrightarrow M$. We will say an object $g\in G$ has attribute $m\in M$ if $R(g,m)=1$, usually written $gRm$. 
\end{definition}

From this perspective we can also produce the formal concepts associated to any formal context, as we show in the next section. This is fairly standard in categorical FCA. 

\subsection{The closure operators}

Fix a formal context $(G,M,R)$. To construct the associated formal concepts, notice that the profunctor $R$ is by currying equivalent to two functors 
\[R^{-}\: G \to [M, \2]\op \text{ and } R_{-}\: M \to [G\op, \2].\]
We also have two Yoneda embeddings, giving a diagram
\begin{center}
\begin{tikzcd}
G \arrow[r, "R^{-}"] \arrow[d, "y"'] & {[M,\2]\op} \\
{[G\op, \2]} & M \arrow[l, "R_{-}"] \arrow[u, "cy"']       
\end{tikzcd}
\end{center}
where $y$ is the Yoneda embedding and $cy$ is the Yoneda coembedding into the presheaf and opcopresheaf categories respectively. As $G$ and $M$ are discrete, the functor categories are precisely the powersets of $G$ and $M$ respectively: $[G\op, \2] = P(G)$ and $[M, \2]\op = P(M)\op$. 

\begin{remark}
    As there are a lot of notational embellishments in this paper we here give a brief overview of the notation we will use. The relation, as well as its associated boolean profunctor is always denoted by $R$. The two curryings are denoted $R^-$ and $R_-$. Their right Kan extensions will be denoted by $R^*$ and $R_*$ respectively, while their left Kan extensions are denoted by $R^!$ and $R_!$ respectively. Hence, all functors with a superscripted embellishment denotes an operator on objects, while all functors with a subscript embellishment denotes an operator on attributes. The same notation is used for the complement relation $\oR$. 
\end{remark}

As $[G\op, \2]$ is complete, there is a right Kan extension of $R^-$ along the Yoneda embedding $y$. The following lemma shows that this gives the standard FCA derivation on $P(G)$. 

\begin{lemma}
    \label{lm:upper-Delta}
    The right Kan extension $R^*$ of $R^-$ is the standard FCA derivation operator $(-)^\Delta$. 
\end{lemma}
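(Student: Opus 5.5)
I will compute the right Kan extension with the pointwise end formula, which is available because the target $[M,\2]\op = P(M)\op$ is complete. For a presheaf $A\in[G\op,\2]=P(G)$ this gives
\[
R^*(A) = \Ran_y(R^-)(A) = \int_{g\in G} \bigl(R^-(g)\bigr)^{[G\op,\2](A,\,y g)}
       = \lim_{(g,\, A\to yg)} R^-(g),
\]
that is, a power (cotensor) of $R^-(g)$ by the hom-object $[G\op,\2](A,yg)$, taken over $g$. Because $G$ is discrete, the end reduces to a product over the objects $g\in G$. The whole proof is then a matter of unwinding what each piece means in the posets involved.

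First I identify the hom-objects. Since $G$ is discrete, $yg$ is the singleton $\{g\}$, and $[G\op,\2](A,\{g\})$ equals $1$ exactly when $A\subseteq\{g\}$. Pointwise Kan extensions along $y$ are more naturally taken with respect to $A$ mapping \emph{into} representables. To get the intended "for all $g\in A$" rather than "$A\subseteq\{g\}$", I will use the dual, density-based description of the end:
\[
\Ran_y(R^-)(A) = \int_g [\,[G\op,\2](yg, A),\ R^-(g)\,],
\]
where the cotensor in $P(M)\op$ is an implication/power. I will justify this using the standard formula $\Ran_K F(c)=\int_x [\mathcal C(c,Kx),Fx]$ together with the variance flip coming from $\op$ on the target. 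I expect this step to be the main obstacle. The difficulty is keeping straight which direction the homs go, given the $\op$ on $[M,\2]$, so I will check it concretely on small cases, such as $A=\emptyset$ and $A$ a singleton.

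Second I unwind the limit. A limit in $P(M)\op$ is an intersection in $P(M)$. A power of an object $X$ by a truth value $t\in\2$ is $X$ when $t=1$ and the top of $P(M)\op$, namely $M$, when $t=0$. Since $[G\op,\2](yg,A)=1$ iff $g\in A$, by Yoneda, the end becomes
\[
\bigcap_{g\in G}\{m\mid g\in A\Rightarrow gRm\} = \{m\in M\mid \forall g\,(g\in A\Rightarrow gRm)\} = A^\Delta .
\]
This uses $R^-(g)=\{m\mid gRm\}$, which holds by the definition of the currying.

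Finally, as a sanity check independent of the formula, I verify the universal property directly. The functor $A\mapsto A^\Delta$ is order-reversing into $P(M)$, hence order-preserving into $P(M)\op$. It restricts along $y$ to $R^-$, since $\{g\}^\Delta=R^-(g)$. Now take any $F\colon P(G)\to P(M)\op$ with a 2-cell $F y\Rightarrow R^-$, i.e.\ $F(\{g\})\supseteq R^-(g)$ for all $g$. Monotonicity then gives $F(A)\supseteq F(\{g\})\supseteq R^-(g)$ for every $g\in A$, so $F(A)\supseteq A^\Delta$. This is precisely the unique factorisation through $A^\Delta$, which confirms that $A^\Delta$ is the terminal such functor and hence is the right Kan extension.
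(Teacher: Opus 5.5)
The last formula you reach in your second step is exactly the paper's. The paper starts from the end $R^*(A)(m)=\int_{g\in G}\2(A(g),gRm)$, in which the presheaf $A$ itself is the weight, and evaluates it in $\2$ as $\bigwedge_g(A(g)\Rightarrow gRm)$. The gap is in how you connect that formula to $\Ran_y(R^-)$.

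The step you flag as the main obstacle cannot be carried out. In $\Ran_K F(c)=\int_x[\mathcal C(c,Kx),Fx]$, the $\op$ on the target changes what powers and limits are. It does not turn $[G\op,\2](A,yg)$ into $[G\op,\2](yg,A)$.

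Your evaluation also silently leaves $P(M)\op$. A limit in $P(M)\op$ is a \emph{union} in $P(M)$, and the top element of $P(M)\op$ is $\varnothing$, not $M$. Evaluated correctly in $P(M)\op$:
\begin{itemize}
\item your swapped formula gives $\bigcup_{g\in A}R^-(g)=A^\Pi$;
\item the literal formula gives $\bigcup_{g:\,A\subseteq\{g\}}R^-(g)$, which is $\varnothing$ whenever $|A|\geq 2$.
\end{itemize}
What you actually computed is the weighted limit $\int_g[A(g),R^-(g)]$ in $P(M)$, i.e.\ the paper's end.

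Your universal-property check fails for the same reason. For monotone $F\colon P(G)\to P(M)\op$ and $g\in A$, monotonicity gives $F(A)\subseteq F(\{g\})$, not $F(A)\supseteq F(\{g\})$. The terminality claim is in fact false. Take $G=\{a,b\}$, $M=\{x\}$, $R=G\times M$, and let $F(G)=\varnothing$ and $F(A)=\{x\}$ for every other $A$. This $F$ is monotone and satisfies $F(\{g\})\supseteq R^-(g)$, yet $F(G)\not\supseteq G^\Delta=\{x\}$. This $F$ is exactly the literal pointwise $\Ran_y R^-$.

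With the inequalities reversed, your argument becomes correct. Suppose $F(\{g\})\subseteq R^-(g)$ for all $g$, i.e.\ a 2-cell $R^-\Rightarrow Fy$. Then $F(A)\subseteq F(\{g\})\subseteq R^-(g)$ for every $g\in A$, so $F(A)\subseteq A^\Delta$. This exhibits $(-)^\Delta$ as $\Lan_y R^-$ into $P(M)\op$. Equivalently, it is the right Kan extension of $R^-\colon G\op\to P(M)$ along $y\op\colon G\op\to P(G)\op$, whose pointwise formula $\int_g[P(G)\op(A,yg),R^-(g)]=\int_g[A(g),R^-(g)]$ is precisely the paper's end.

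To repair the proof, drop the appeal to a variance flip on the target and do one of the following:
\begin{itemize}
\item start, as the paper does, from that weighted end and do your computation in $\2$ (equivalently in $P(M)$); or
\item fix the variance explicitly as above and verify the universal property with the directions corrected.
\end{itemize}
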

\begin{proof}
    By \cite[Section X.4]{maclane_1998} (see also \cite[Section 4]{kelly_1982}), the functor $R^*$ is given pointwise by a categorical end-formula, which for a presheaf $A\in [G\op, \2]$ is
    \[R^*(A)(m) = \int_{g\in G} \2(A(g), gRm).\]
    In $\2$, the internal hom is given by \emph{implication}, reducing it to the conjunction 
    \[R^*(A)(m) = \bigwedge_{g\in G}(A(g) \implies gRm).\]
    In other words, having $m\in R^*(A)$ is equivalent to having that for all $g$, if $g\in A$ then $g$ has the attribute $m$. This is precisely the FCA derivation operation $(-)^\Delta$ on $P(G)$ as defined earlier. 
\end{proof}

Similarly, the other curried functor $R_{-}$ has a right Kan extension $R_*$ along the coYoneda embedding $cy\: M\to [M,\2]\op$. This gives the other standard FCA derivation operation, as the following lemma shows.  

\begin{lemma}
    \label{lm:lower-Delta}
    The right Kan extension $R_*$ of $R_-$ is the standard FCA derivation operator $(-)_\Delta$. 
\end{lemma}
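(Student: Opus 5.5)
The plan is to mirror the proof of Lemma~\ref{lm:upper-Delta}, dualized to account for the coYoneda embedding and the opposite category. The functor $R_-\: M\to[G\op,\2]$ is being extended along $cy\: M\to[M,\2]\op$. Since $[G\op,\2]$ is complete, the right Kan extension exists and is computed pointwise by the end formula of \cite[Section X.4]{maclane_1998} (equivalently \cite[Section 4]{kelly_1982}). For an object $B\in[M,\2]\op$, that is, a subset $B\subseteq M$, and for $g\in G$ this gives
\[R_*(B)(g) = \int_{m\in M} \2\big([M,\2]\op(B, cy(m)),\, gRm\big).\]

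The main obstacle is evaluating the hom $[M,\2]\op(B,cy(m))$ correctly, so that the variance comes out right. By definition it equals $[M,\2](cy(m),B)$, where $cy(m)=M(m,-)$ is the representable copresheaf. By the (co)Yoneda lemma, $[M,\2](M(m,-),B)\cong B(m)$. Since $M$ is discrete, this is just the truth value of $m\in B$. I would state this identification explicitly, since it is the only step where the opposite category could flip an implication.

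Substituting this in, and using that the end over a discrete category in $\2$ is a meet and that the internal hom of $\2$ is implication, we obtain
\[R_*(B)(g)=\bigwedge_{m\in M}\big(B(m)\implies gRm\big).\]
Hence $g\in R_*(B)$ exactly when every $m\in B$ is an attribute of $g$, which is the definition of $B'=(-)_\Delta$ on $P(M)$.

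As a sanity check, I would note that on objects $R_*$ is order-reversing from $P(M)$ to $P(G)$. This is consistent with its being a covariant functor $P(M)\op\to P(G)$, since a larger $B$ imposes more conditions.
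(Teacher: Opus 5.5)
Your proposal is correct and follows the paper's approach. Like the paper, you use the pointwise end formula for the right Kan extension and collapse it in $\2$ to $\bigwedge_{m\in M}(B(m)\implies gRm)$. The one addition is that you explicitly justify $[M,\2]\op(B,cy(m))\cong[M,\2](M(m,-),B)\cong B(m)$ via the co-Yoneda lemma, a step the paper's displayed formula $\int_{m\in M}\2(B(m),gRm)$ takes for granted.
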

\begin{proof}
    The proof is completely analogous to \cref{lm:upper-Delta}, as the right Kan extension has the formula
    \[R_*(B)(g)=\int_{m\in M}\2(B(m), gRm).\]
    Hence, the same argument works, giving $R_* = (-)_\Delta$.
\end{proof}

These two functors are not unrelated, but form an adjoint pair on the powerset categories. 

\begin{lemma}
    \label{lm:adjoint-Delta}
    The two operators $R^*$ and $R_*$ form an adjoint pair
    \begin{center}
    \begin{tikzcd}
    {[G\op, \2]} \arrow[r, "R^*", yshift=2pt] & {[M, \2]\op} \arrow[l, "R_*", yshift=-2pt].
    \end{tikzcd}
    \end{center}
\end{lemma}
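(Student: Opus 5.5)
The plan is to verify the adjunction directly on hom-sets, using the explicit end formulas from \cref{lm:upper-Delta} and \cref{lm:lower-Delta}. Since all categories involved are posets (indeed powersets), an adjunction $R^*\dashv R_*$ between $[G\op,\2]=P(G)$ and $[M,\2]\op=P(M)\op$ amounts to the equivalence
\[
  R^*(A)\le B \text{ in } P(M)\op \iff A\le R_*(B) \text{ in } P(G),
\]
for all $A\subseteq G$ and $B\subseteq M$. The first condition unwinds to $B\subseteq A^\Delta$. Naturality is automatic in the posetal setting, so only this equivalence needs checking.

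I will reduce both sides to the same symmetric condition. By \cref{lm:upper-Delta}, $B\subseteq A^\Delta$ means
\[
  \bigwedge_{m\in M}\Bigl(B(m)\implies \bigwedge_{g\in G}(A(g)\implies gRm)\Bigr).
\]
By \cref{lm:lower-Delta}, $A\subseteq B_\Delta$ means
\[
  \bigwedge_{g\in G}\Bigl(A(g)\implies \bigwedge_{m\in M}(B(m)\implies gRm)\Bigr).
\]
Using the $\2$-enriched currying $(p\implies (q\implies r))\iff(p\wedge q\implies r)$ and the fact that implication distributes over meets in its second argument, both conditions become
\[
  \bigwedge_{g,m} \bigl(A(g)\wedge B(m)\implies gRm\bigr),
\]
that is, $A\times B\subseteq R$. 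This settles the equivalence.

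A more conceptual alternative, which I would mention as a remark, is the following. $[G\op,\2]$ is the free cocompletion of $G$, and $R^*$ sends colimits (unions) to limits in $P(M)$. Viewed as a cocontinuous functor into $P(M)\op$, it therefore has a right adjoint given by the nerve $B\mapsto [M,\2]\op(R^-(-),B)$, and this nerve is exactly $R_*$ by the end formula. The main obstacle is purely bookkeeping with the variance: one has to check that the direction of the inequality in $P(M)\op$ yields $B\subseteq A^\Delta$ rather than the reverse. The elementwise computation above avoids this pitfall, so I would present that argument and keep the nerve remark only as context.
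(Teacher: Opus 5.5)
Your proof is correct and takes the same route as the paper: a direct pointwise check of $R^*(A)\le B \iff A\le R_*(B)$ from the end formulas, using distributivity of $\implies$ over meets and residuation in $\2$. Your variance bookkeeping is in fact the correct form of the paper's chain. The paper writes the first condition as $\bigwedge_{g}(A(g)\implies gRm)\le B(m)$, which reads the order of $[M,\2]\op$ in the wrong direction, so its ``meet is below'' and residuation steps fail as stated. Your reading $B\subseteq A^\Delta$, i.e.\ $A\times B\subseteq R$, is what makes those steps valid.
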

\begin{proof}
    We need to show that for any objects $A\in [G\op, \2]$ and $B\in [M, \2]\op$, we have $R^*(A)\leq B$ if and only if $A\leq R_*(B)$. By the definition of $R^*$, we have $R^*(A)\leq B$ if and only if for all $m\in M$, 
    \[\bigwedge_{g\in G}(A(g)\implies gRm)\leq B(m).\]
    The meet is below if and only if each factor is, hence this is equivalent to the statement that for all $m\in M$ and $g\in G$,
    \[(A(g)\implies gRm) \leq B(m).\]
    By residuation, this is again equivalent to the statement that for all $g\in G$, 
    \[A(g) \leq \bigwedge_{m\in M} (B(m) \implies gRm).\]
    The right hand side is the pointwise definition of $R_*$, giving finally $A\leq R_*(B)$. 
\end{proof}

We can now, as is standard in categorical FCA, recognize the formal concepts of $(G,M,R)$ as the fixedpoints of the adjunction $R^*\dashv R_*$, often called the \emph{nucleus} of the profunctor $R$, see \cite[Chapter 5]{ferrer_2023}. 

More formally, the composition $R_* R^*$ is a monad on $P(G)$, and the extents of the formal concepts are the objects in the Eilenberg--Moore category of this monad. Similarly, the composition $R^* R_*$ is a comonad on $[M,\2]\op$, and hence a monad on $[M,\2] = P(M)$. Its Eilenberg--Moore category gives the intents of the formal concepts. The nucleus consists of pairs of such objects and attributes, in other words: an object in $\mathrm{Nuc}(R)$ is of the form $(A,B)$ where $A = R_*(R^*(B))$ and $B=R^* (R_* (A))$. By adjunction, however, these determine each other, so we can focus only on one of them, choosing throughout the paper to focus on the object side. 

To summarize, the formal concepts arise completely naturally via standard categorical constructions from the profunctor $R$ via the fixedpoints of the monad $R_*R^*$ on $[G\op, \2]$, defined using right Kan extensions of the two associated curryings. 

\subsection{Two more operators}

Above we defined the functors $R^*$ and $R_*$ as the right Kan extensions of the two curried functors $R^-$ and $R_-$ along the respective Yoneda embeddings. As $[G\op, \2]$ also is cocomplete, this prompts the idea to further consider the \emph{left} Kan extensions of the two curried functors. This will give us another two of the eight operations described by Dubois--Prade. 

\begin{lemma}
    \label{lm:upper-Pi}
    Given a formal context $R\: G\nrightarrow M$, the left Kan extension $R^!$ of the functor $R^-$ coincides with the possibilistic operator $(-)^\Pi$.  
\end{lemma}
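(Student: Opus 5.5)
The plan is to mirror the proof of \cref{lm:upper-Delta}, replacing the end formula by the dual coend formula for pointwise left Kan extensions. By \cite[Section X.4]{maclane_1998} (see also \cite[Section 4]{kelly_1982}), since the codomain is cocomplete, the left Kan extension of $R^-$ along the Yoneda embedding $y$ exists. It is computed pointwise, for a presheaf $A\in[G\op,\2]$, by
\[R^!(A)(m) = \int^{g\in G} A(g)\otimes gRm,\]
where $\otimes$ is the tensor of the enriching base $\2$. I will read this formula with the same orientation of the values $gRm$ that was used for $R^*$ in \cref{lm:upper-Delta}.

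The first step is to identify the ingredients of this formula in $\2$. The tensor (copower) in $\2$ is conjunction $\wedge$. A coend over the discrete category $G$ has no nontrivial dinaturality identifications, so it reduces to a coproduct, and coproducts in $\2$ are joins. The formula therefore collapses to
\[R^!(A)(m) = \bigvee_{g\in G}\bigl(A(g)\wedge gRm\bigr).\]
The second step is to read this off: $m\in R^!(A)$ holds exactly when there exists $g\in G$ with $g\in A$ and $gRm$. This is precisely the definition of $A^\Pi$ from the introduction. Since both sides are determined by their values at each $m$, this gives $R^!=(-)^\Pi$.

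As a sanity check, I will also verify directly that the operator $A\mapsto A^\Pi$ satisfies the defining properties of $\Lan_y(R^-)$. First, it restricts along $y$ to $R^-$: for a singleton $\{g\}=y(g)$ we get $\{g\}^\Pi=\{m \mid gRm\}$. Second, it preserves joins, i.e.\ $(\bigcup_i A_i)^\Pi=\bigcup_i A_i^\Pi$. Every presheaf $A$ is the union of the singletons $y(g)$ for $g\in A$ (the density/co-Yoneda lemma). Hence any join-preserving extension of $R^-$ along $y$ is forced to send $A$ to $\bigcup_{g\in A}\{m\mid gRm\}$, which is again $A^\Pi$.

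The main obstacle is the variance of the target $[M,\2]\op$. Colimits there are limits in $[M,\2]$, so a naive reading of the formula would produce intersections and implications, which would just recover $(-)^\Delta$. I will handle this by being explicit about which category the extension is computed in. The intended reading is that the pointwise values in $\2$ are taken with the same convention as in \cref{lm:upper-Delta}. Under that convention, colimits of the values $gRm$ are computed as joins in $P(M)$, and the existential formula above is what results. I expect this convention to also be the one the paper uses for $R_!$ and for the complemented operators in the main theorem. If the direct coend reading stays ambiguous, I will rely on the density argument of the previous paragraph to settle the identification instead.
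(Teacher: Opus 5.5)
Your computation is the paper's proof step for step: coend formula, tensor $=\wedge$, coend over a discrete category $=$ join, then read off $(-)^\Pi$. However, the ``main obstacle'' you flag is a genuine gap, and choosing a convention cannot remove it.

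With the currying $R^-\colon G\to[M,\2]\op$ that the paper fixes, the copower and the coend in the pointwise formula for $\Lan_y$ are computed in $[M,\2]\op$. That is, they are a power and an end in $[M,\2]$. The formula therefore evaluates to $\bigwedge_{g\in G}(A(g)\implies gRm)=A^\Delta(m)$, exactly as you suspected. This is literally the expression in \cref{lm:upper-Delta}, so reading the coend ``with the same orientation as for $R^*$'' yields $(-)^\Delta$, not $(-)^\Pi$.

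Your density fallback gives the same verdict once it is run in the stated codomain. $\Lan_y$ into a cocomplete category is the unique colimit-preserving extension, and colimits in $[M,\2]\op$ are intersections in $P(M)$. So $A=\bigcup_{g\in A}\{g\}$ is forced to go to $\bigcap_{g\in A}\{m\mid gRm\}=A^\Delta$.

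No reading rescues the literal statement. Every functor $P(G)\to[M,\2]\op$ is antitone as a map into $P(M)$, whereas $(-)^\Pi$ is monotone. For $G=\{a\}$, $M=\{x\}$, $aRx$ one has $\varnothing^\Pi=\varnothing$ but $\{a\}^\Pi=\{x\}\not\subseteq\varnothing$. The paper's proof makes the same slip silently when it asserts that the coend is a disjunction because $[M,\2]\op$ is a complete lattice. Yet later it correctly uses that colimits in $[M,\2]\op$ are intersections.

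The repair is to change the functor being extended, not the way the coend is read. Since $G$ is discrete, $G\op=G$, so you may curry $R$ covariantly as $G\to[M,\2]$, $g\mapsto R(g,-)$. For this functor the copower really is $\wedge$ and the coend is a union in $[M,\2]=P(M)$. Your formula $\bigvee_{g}(A(g)\wedge gRm)$ is then correct. Your density argument alone also becomes a complete proof, because $(-)^\Pi$ preserves unions and sends $y(g)=\{g\}$ to $\{m\mid gRm\}$. You should say explicitly that $R^!$ is the left Kan extension of this covariant currying. This is a different functor from the $[M,\2]\op$-valued $R^-$, whose left Kan extension along $y$ is $(-)^\Delta$.
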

\begin{proof}
    For the functor $R^-$, the left Kan extension is defined by the coend formula
    \[R^!(A)(m) = \int^{g\in G}(A(g)\otimes gRm).\]
    The monoidal structure in $\2$ is the \emph{and} operation, and as $[M,\2]\op$ is a complete lattice, the coend is the disjunction
    \[R^!(A)(m) = \bigvee_{g\in G}(A(G)\wedge gRm).\]
    In words, we have $m\in R^!(A)$ if and only if there is some $g\in A$ such that $g$ has the attribute $m$, which is precisely the definition of the operator $(-)^\Pi$.
\end{proof}

Similar to before, we also have the left Kan extension of the other curried functor $R_-$. As above, this coincides with the possibilistic operator $(-)_\Pi$. 

\begin{lemma}
    \label{lm:lower-Pi}
    The left Kan extension $R_!$ of the functor $R_-$ coincides with the possibilistic operator $(-)_\Pi$. 
\end{lemma}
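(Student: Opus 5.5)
The plan is to mirror the proof of \cref{lm:upper-Pi}, replacing the Yoneda embedding $y$ by the coYoneda embedding $cy\: M\to [M,\2]\op$ and keeping careful track of the variance. The left Kan extension $R_!$ of $R_-\: M\to [G\op,\2]$ along $cy$ is computed pointwise by a coend formula, since the target $[G\op,\2]=P(G)$ is cocomplete. For $B\in[M,\2]\op$, i.e.\ a subset $B\subseteq M$ viewed as a copresheaf, and $g\in G$, I would write
\[R_!(B)(g) = \int^{m\in M}\bigl(B(m)\otimes gRm\bigr).\]
The first step is to justify this formula. The hom out of $cy(m)$ in $[M,\2]\op$ evaluates to $B(m)$ by the co-Yoneda lemma, just as homs into $A$ out of $y(g)$ gave $A(g)$ in \cref{lm:upper-Pi}. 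I would spell out this variance check explicitly, because it is the only point where the argument differs from the object-side case.

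Next I would simplify the coend using the fact that $M$ is discrete. The coend is then simply a coproduct over the objects $m\in M$. In $\2$ the tensor is $\wedge$ and colimits are joins, so the formula becomes
\[R_!(B)(g) = \bigvee_{m\in M}\bigl(B(m)\wedge gRm\bigr).\]

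Finally I would read this off. We have $g\in R_!(B)$ if and only if there exists $m\in B$ with $gRm$, which is exactly the definition of $B_\Pi$, the attribute-side analogue of $(-)^\Pi$.

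The main obstacle is the variance bookkeeping in the first step. It has to come out as $B(m)$ and not the complement or the opposite order. Since $[M,\2]\op$ reverses inclusion, one might fear that the left Kan extension becomes a meet. If necessary I would settle this by checking functoriality directly: $R_!$ must be a functor out of $[M,\2]\op$, and the resulting operator $B\mapsto B_\Pi$ is order-preserving in $B\subseteq M$. This requires reading $cy$ and the domain order consistently with the convention the paper used for $R_*$ in \cref{lm:lower-Delta}, where the end formula $\int_m \2(B(m),gRm)$ was likewise written with $B(m)$. Adopting that same convention, I would simply assert the coend formula by duality with \cref{lm:lower-Delta} and \cref{lm:upper-Pi}.
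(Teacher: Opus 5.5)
Your computation is the paper's proof. The paper likewise writes $R_!(B)(g)=\int^{m\in M}(B(m)\otimes gRm)$, collapses it to $\bigvee_{m}(B(m)\wedge gRm)$ using discreteness of $M$, and reads off $B_\Pi$, asserting the coend formula without justification. The problem is the justification you give for the one step you single out: it is false. In $[M,\2]\op$ one has $[M,\2]\op(cy\,m,B)=[M,\2](B,M(m,-))$, and for discrete $M$ this is the truth value of $B\subseteq\{m\}$, not $B(m)$. The co-Yoneda lemma gives $B(m)$ for the hom \emph{into} $cy\,m$, namely $[M,\2]\op(B,cy\,m)=[M,\2](M(m,-),B)$. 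That hom is the weight in the end formula for a right Kan extension, which is why \cref{lm:lower-Delta} holds as stated. The coend formula for a left Kan extension, however, weights $R_-(m)$ by the hom \emph{out of} $cy\,m$. Taken literally, $\Lan_{cy}(R_-)(B)=\bigcup\{R_-(m)\mid B\subseteq\{m\}\}$. This sends $\varnothing$ to the set of all objects having some attribute, which is nonempty as soon as $R$ is, whereas $\varnothing_\Pi=\varnothing$. Your fear that the join becomes a meet is misplaced, since the colimit is taken in the codomain $[G\op,\2]$; the variance problem lies entirely in the weight.

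Your fallback check would expose this rather than settle it. A functor $[M,\2]\op\to[G\op,\2]$ must be order-\emph{reversing} for inclusion of subsets $B\subseteq M$, while $B\mapsto B_\Pi$ is order-preserving. So $(-)_\Pi$ is not, in general, a functor out of $[M,\2]\op$ at all, let alone a Kan extension along $cy$. Adopting the convention of \cref{lm:lower-Delta} does not help either, for the reason just given.

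The repair uses discreteness once more. Since $M=M\op$, extend $R_-$ along the Yoneda embedding $y_M\colon M\to[M\op,\2]=P(M)$, with the inclusion order. There $[M\op,\2](y_M(m),B)=B(m)$ genuinely holds by Yoneda, so your coend is the honest pointwise left Kan extension and the rest of your argument goes through verbatim. Equivalently, $(-)_\Pi$ is the unique union-preserving extension of $m\mapsto R_-(m)$ from singletons. The paper's proof makes this identification silently, so with this change your proposal coincides with it.
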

\begin{proof}
    The left Kan extension is given by
    \[R_!(B)(g) = \int^{m\in M}(B(m)\otimes gRm),\]
    which is equivalent to 
    \[R_!(B)(g)=\bigvee_{m\in M}B(m)\wedge gRm,\]
    which is the pointwise definition of $(-)_\Pi$. 
\end{proof}

As opposed to the previous two functors $(-)^\Delta$ and $(-)^\Delta$, these are not adjoint to each other. We will later, however, prove that they are part of adjoint pairs.

\subsection{The complement profunctor}

For any boolean profunctor $R$ one can also define its \emph{complement} $\oR\: G\nrightarrow M$, defined by $g\oR m := \neg gRm$. This defines the \emph{complement} formal context $(G,M,\oR)$ as discussed in \cite{djouadi_prade_2011,yakub-djouadi-dubois-prade_2016}. 

As $\oR$ is a boolean profunctor in its own right, it also has four associated functors, $\oR^*$, $\oR_*$, $\oR^!$ and $\oR_!$, coming from the right and left Kan extensions. In the following lemmas we show that these give the remaining four possibilistic operators via a negation duality $\neg$ between operators on $(G,M,R)$ and operators on the complement context $(G,M,\oR)$. 

The negation is defined pointwise on any presheaf $A$ as $\neg A(g)$. Given an operator $F$ on $P(G)$, we will call precomposing with the complement operator the \emph{right dual} of $F$, and denote it $F\neg$. Similarly we define the \emph{left dual} to be $\neg F$. 

\begin{lemma}
    \label{lm:upper-N}
    The right dual of the operator $\oR^*$ coincides with the possibilistic operator $(-)^N$. 
\end{lemma}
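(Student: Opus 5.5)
The plan is to compute $\oR^*$ pointwise exactly as in \cref{lm:upper-Delta}, precompose with the pointwise negation, and compare the result with the defining formula for $(-)^N$. Since $\oR$ is itself a boolean profunctor $G\nrightarrow M$, the proof of \cref{lm:upper-Delta} applies verbatim with $R$ replaced by $\oR$. For a presheaf $A\in[G\op,\2]$ this gives the end formula
\[\oR^*(A)(m) = \int_{g\in G}\2(A(g), g\oR m) = \bigwedge_{g\in G}\bigl(A(g)\implies g\oR m\bigr).\]

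Next I would evaluate the right dual $\oR^*\neg$ at $A$, which means substituting $\neg A$ for $A$:
\[\oR^*(\neg A)(m) = \bigwedge_{g\in G}\bigl(\neg A(g)\implies \neg\, gRm\bigr),\]
using $g\oR m = \neg gRm$. The key step is contraposition in the Boolean algebra $\2$: $(\neg p\implies\neg q) = (q\implies p)$. Applying it termwise turns each conjunct into $gRm\implies A(g)$. Hence $m\in\oR^*(\neg A)$ holds if and only if, for all $g\in G$, $gRm$ implies $g\in A$. This is precisely the definition of $A^N$, so $\oR^*\neg = (-)^N$ as operators $P(G)\to P(M)$.

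The only point needing care is variance and bookkeeping rather than mathematics. The negation $\neg$ is order-reversing on $P(G)$, and $\oR^*$ lands in $[M,\2]\op$. So I would remark that $\oR^*\neg$ is covariant as a map $P(G)\to P(M)$, which matches $(-)^N$ being monotone. This is consistent with the later adjunction $(-)_\Pi\dashv(-)^N$. Since the lemma only asserts equality of the underlying operators on powersets, this remark suffices, and no step is a genuine obstacle. Contraposition is valid because $\2$ is Boolean, and that is the one place where the argument would fail for a general quantale.
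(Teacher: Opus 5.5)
Your proposal is correct and follows essentially the same route as the paper. You write $\oR^*(\neg A)(m)=\bigwedge_{g}(\neg A(g)\implies g\oR m)$ via the end formula, substitute $g\oR m=\neg gRm$, and apply contraposition termwise to reach $\bigwedge_g(gRm\implies A(g))$, which is the definition of $A^N$; your extra remark that precomposing with the order-reversing $\neg$ makes $\oR^*\neg$ monotone as a map $P(G)\to P(M)$ is accurate and is left implicit in the paper.
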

\begin{proof}
    The functor is given pointwise by 
    \[\oR^*(\neg A)(m) = \bigwedge_{g\in G}(\neg A(g)\implies g\oR m).\] 
    By definition we have $g\oR m = \neg gRm$, which by the equivalence $\neg P\implies \neg Q \iff Q\implies P$, gives 
    \[\oR^*(\neg A)(m) = \bigwedge_{g\in G}(gRm \implies A(g)),\]
    which is the definition of $(-)^N$. 
    \end{proof}

Similarly, for the Kan extension of the other curried functor $\oR_-$, we obtain the operator $(-)_N$.

\begin{lemma}
    \label{lm:lower-N}
    The right dual $\oR_*\neg$ of the right Kan extension of the curried functor $\oR_-$, coincides with the possibilistic operator $(-)_N$.
\end{lemma}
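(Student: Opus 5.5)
The plan is to mirror the proof of \cref{lm:upper-N}, using the end formula for the right Kan extension along the coYoneda embedding from \cref{lm:lower-Delta}, now applied to the complement profunctor $\oR$ in place of $R$. For $B\in[M,\2]\op$, the same argument as in \cref{lm:lower-Delta} gives
\[\oR_*(B)(g)=\int_{m\in M}\2(B(m), g\oR m)=\bigwedge_{m\in M}\bigl(B(m)\implies g\oR m\bigr),\]
because the internal hom of $\2$ is implication and ends in $\2$ are meets.

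Next I precompose with the pointwise negation, evaluating at $\neg B$. This gives
\[\oR_*(\neg B)(g)=\bigwedge_{m\in M}\bigl(\neg B(m)\implies \neg\, gRm\bigr),\]
where I have substituted the definition $g\oR m=\neg gRm$. Contraposition, $(\neg P\implies\neg Q)\iff(Q\implies P)$, then turns each factor into $gRm\implies B(m)$. The result is
\[\oR_*\neg(B)(g)=\bigwedge_{m\in M}\bigl(gRm\implies B(m)\bigr).\]
So $g$ lies in $\oR_*\neg(B)$ exactly when every attribute that $g$ possesses lies in $B$. This is the attribute-side version of $(-)^N$ from the introduction, namely $B_N=\{g\in G\mid \forall m\in M,\ (gRm\implies m\in B)\}$.

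The only delicate step is the variance bookkeeping. Since $B$ lives in $[M,\2]\op$, I need to check that negation is well defined as an operation there; it is, because it acts pointwise on the underlying subset of $M$ and we compose only at the level of object maps. I also need to check that the end formula has the hom in the same orientation as in \cref{lm:lower-Delta}. Once that is confirmed, the computation is purely propositional and identical in form to the proof of \cref{lm:upper-N}.
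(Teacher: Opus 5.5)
Your proposal is correct and matches the paper's argument. The paper's proof just says it is the same as \cref{lm:upper-N}: expand $\oR_*(\neg B)(g)$ as a meet of implications, substitute $g\oR m=\neg gRm$, and apply contraposition to get $\bigwedge_{m\in M}(gRm\implies B(m))$, the pointwise definition of $(-)_N$, which is exactly what you do (your variance remarks are harmless bookkeeping the paper omits).
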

\begin{proof}
    The proof is the same as \cref{lm:upper-N}. 
\end{proof}

Now we have but two operators left, which arise from the two left Kan extensions associated to the two curryings of $\oR$. The proofs are essentially just using the definitions of the operators. 

\begin{lemma}
    \label{lm:upper-nabla}
    The right dual $\oR^!\neg$ of the left Kan extension of the curried functor $\oR^-$, is equivalent to the possibilistic operator $(-)^\nabla$.
\end{lemma}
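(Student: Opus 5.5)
The plan is to compute $\oR^!\neg$ pointwise via the coend formula, exactly as in the proof of \cref{lm:upper-Pi}, and then unfold the definition of $\oR$. First I recall that $\oR$ is itself a boolean profunctor $G\nrightarrow M$, so its currying $\oR^-\: G\to[M,\2]\op$ has a left Kan extension $\oR^!$ along the Yoneda embedding $y$. By the argument of \cref{lm:upper-Pi}, applied verbatim with $R$ replaced by $\oR$, it is given pointwise for a presheaf $A\in[G\op,\2]$ by
\[\oR^!(A)(m) = \int^{g\in G}(A(g)\otimes g\oR m) = \bigvee_{g\in G}(A(g)\wedge g\oR m).\]

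Next I precompose with the pointwise negation $\neg$ on $P(G)$, which simply replaces $A(g)$ by $\neg A(g)$:
\[\oR^!(\neg A)(m) = \bigvee_{g\in G}(\neg A(g)\wedge g\oR m).\]
Read in $\2$, this says $m\in\oR^!(\neg A)$ if and only if there is some $g\in G$ with $g\notin A$ and $g\oR m$. This is exactly the defining condition $\exists g\in G, (g\notin A\wedge g\oR m)$ of $A^\nabla$ from the introduction. Hence $\oR^!\neg = (-)^\nabla$ as maps on objects of $P(G)$. Unlike \cref{lm:upper-N}, no contrapositive rewriting is needed, since the introduction already states $(-)^\nabla$ in terms of $\oR$.

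The only point needing care, and the one I expect to be the main obstacle, is the variance of the right dual. Negation is order-reversing on $P(G)$, so $\oR^!\neg$ is a functor $P(G)\op\to[M,\2]\op$, equivalently an order-preserving map $P(G)\to P(M)$. This matches the fact that $(-)^\nabla$ is antitone in $A$. I would remark on this explicitly so that the identification is an identification of functors with the correct source and target, and not just of underlying set maps. Since $P(G)$ and $P(M)$ are posets, agreement on objects then suffices: functors between posets are determined by their action on objects.
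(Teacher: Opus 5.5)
Your core computation is the paper's proof. You write $\oR^!$ via the coend formula as $\bigvee_{g}(A(g)\wedge g\oR m)$, substitute $\neg A$, and read off the defining condition of $(-)^\nabla$. That part is correct, and it is all the lemma asserts.

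The extra variance paragraph, which you flag as the main obstacle, is self-contradictory as written. You conclude that $\oR^!\neg$ is ``equivalently an order-preserving map $P(G)\to P(M)$''. You then say this matches $(-)^\nabla$ being antitone. These are opposite claims: a map $P(G)\to P(M)$ that is both monotone and antitone is constant, and $(-)^\nabla$ is not constant in general.

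The error comes from taking the codomain $[M,\2]\op$ of $\oR^!$ over from the setup without checking it. The formula $\bigvee_{g}(A(g)\wedge g\oR m)$ you use is monotone in $A$ as a map into $P(M)$. So it is a functor $[G\op,\2]\to[M,\2]$, not a functor into $[M,\2]\op$. A genuine left Kan extension along $y$ into $[M,\2]\op$ would take joins in $[M,\2]\op$, which are intersections in $P(M)$, and would give the $\Delta$-type formula instead. With the variance the formula actually has, $\oR^!\neg$ is antitone $P(G)\to P(M)$, i.e.\ order-preserving $P(G)\to P(M)\op$, and that does match $(-)^\nabla$.

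So either drop that paragraph or redo it this way. The objectwise equality is unaffected either way. As it stands, the paragraph undercuts your claim of an identification ``with the correct source and target'' rather than securing it.
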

\begin{proof}
    By definition we have 
    \[\oR^!(\neg A)(m)=\bigvee_{g\in G} (\neg A(g)\wedge g\oR m).\]
    which is the pointwise definition of $(-)^\nabla$.   
\end{proof}

\begin{lemma}
    \label{lm:lower-nabla}
    The right dual $\oR_!\neg$ of the left Kan extension of the curried functor $\oR_-$, is equivalent to the possibilistic operator $(-)_\nabla$.
\end{lemma}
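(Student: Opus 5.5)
The plan is to mirror the proof of \cref{lm:upper-nabla}, now using the other currying $\oR_-\: M\to[G\op,\2]$ of the complement profunctor. First, I will write down the coend formula for the left Kan extension of $\oR_-$ along the Yoneda coembedding, exactly as in \cref{lm:lower-Pi} with $R$ replaced by $\oR$. For a subset $B\subseteq M$, viewed as an object of $[M,\2]\op$, this gives
\[\oR_!(B)(g)=\int^{m\in M}(B(m)\otimes g\oR m).\]

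Next, I will precompose with the pointwise negation to form the right dual. Since $\2$ is a complete lattice and $\otimes$ is conjunction, the coend collapses to a join, so that
\[\oR_!(\neg B)(g)=\bigvee_{m\in M}(\neg B(m)\wedge g\oR m).\]
Read in words, $g\in\oR_!\neg(B)$ holds exactly when there exists an attribute $m\notin B$ such that $g$ does not have $m$. This is precisely the definition of $(-)_\nabla$ on subsets of $M$, namely the attribute-side analogue of $A^\nabla$ with the roles of $G$ and $M$ exchanged. That completes the identification.

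There is no genuine obstacle here. The only point needing care is bookkeeping of variance: $\oR_!$ lands in $P(G)$, while its input lives in $[M,\2]\op$. One must check that negation on $P(M)$ is applied pointwise to $B$ before the Kan extension is taken, not after, so that the quantifier over $m$ is existential rather than universal. I would also state explicitly how $(-)_\nabla$ is defined. The paper only spelled out the object-side operators and said the attribute-side ones are defined ``similarly'', so I will take $B_\nabla=\{g\in G\mid \exists m\in M,\ (m\notin B\wedge g\oR m)\}$ and observe that it matches the formula above.
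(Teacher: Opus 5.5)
Your proposal is correct and follows the paper's proof: the paper only says the argument is the same as \cref{lm:upper-nabla}, namely writing $\oR_!(\neg B)(g)=\bigvee_{m\in M}(\neg B(m)\wedge g\oR m)$ and recognising the attribute-side definition of $(-)_\nabla$, and that is exactly what you spell out. One small correction to your variance remark: the coend $\int^{m}B(m)\otimes g\oR m$ is the left Kan extension along the Yoneda embedding $y\colon M\to[M\op,\2]=P(M)$ (as the theorem in the introduction writes it), not along the coembedding into $[M,\2]\op$, so $\oR_!$ itself is monotone for inclusion on $P(M)$, and only the composite $\oR_!\neg$ is a functor out of $[M,\2]\op$, which matches the variance of $(-)_\nabla$.
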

\begin{proof}
    The proof is the same as \cref{lm:upper-nabla}. 
\end{proof}

To summarize then, we have four pairs of adjoint functors, arising from two boolean profunctors. This gives \cref{tab:lan-ran}, where we have denoted operators as either a right Kan extension (Ran) or a left Kan extension (Lan) and omitted the duals. 

\begin{table}[ht]
\centering
\begin{tabular}{l|ll}
& Ran & Lan\\
\hline
$R^-$   & $(-)^\Delta$ & $(-)^\Pi$ \\
$R_-$   & $(-)_\Delta$ & $(-)_\Pi$  \\
$\oR^-$ & $(-)^N$      &  $(-)^\nabla$ \\
$\oR_-$ & $(-)_N$    & $(-)_\nabla$
\end{tabular}
\caption{Possibilistic operators as Kan extensions}
\label{tab:lan-ran}
\end{table}

\subsection{Properties}

In \cite{djouadi_prade_2011}, several properties and relations between the eight possibilistic operators were proven. In this section we reprove some of these from standard categorical properties. First, however, we prove that this categorical perspective also recovers the De Morgan dualities between these eight operators, as is their usual classical definitions in possibility theory -- the necessity is usually defined as $N(A) = 1-\Pi(\neg A)$, and similarly, $\nabla(A)=1-\Delta(\neg A)$. 

\begin{lemma}
    \label{lm:De-Morgan-duality}
    The eight possibilistic operators are related by De Morgan dualities: $(-)^N = \neg (-)^\Pi \neg$ and $(-)^\nabla = \neg (-)^\Delta \neg$, and similarly for the lower-case operators. 
\end{lemma}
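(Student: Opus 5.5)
The plan is to verify the two identities $(-)^N = \neg(-)^\Pi\neg$ and $(-)^\nabla = \neg(-)^\Delta\neg$ pointwise, using the Kan extension formulas already computed. I will then note that the lower-case versions follow by the same computation with the roles of $G$ and $M$ exchanged, using Lemmas \ref{lm:lower-Delta}, \ref{lm:lower-Pi}, \ref{lm:lower-N} and \ref{lm:lower-nabla} in place of their upper counterparts. Since all operators take values in $\2$, which is a Boolean algebra, I may freely use the De Morgan laws for $\neg$ against $\bigwedge$ and $\bigvee$, double negation $\neg\neg P = P$, and the identity $\neg(P\wedge Q) = (P\implies\neg Q)$.

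For the first identity, I start from Lemma \ref{lm:upper-Pi}, which gives $(\neg A)^\Pi(m) = \bigvee_{g\in G}(\neg A(g)\wedge gRm)$. Negating turns the join into a meet:
\[\neg (\neg A)^\Pi(m) = \bigwedge_{g\in G}\neg(\neg A(g)\wedge gRm) = \bigwedge_{g\in G}(gRm\implies A(g)).\]
By the proof of Lemma \ref{lm:upper-N}, this is exactly $(-)^N$ evaluated at $A$. Equivalently, the identity $\neg R^!\neg = \oR^*\neg$ says that negating a left Kan extension for $R$ yields the right Kan extension for $\oR$.

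For the second identity, Lemma \ref{lm:upper-Delta} gives $(\neg A)^\Delta(m) = \bigwedge_{g}(\neg A(g)\implies gRm)$. Negating, I get
\[\bigvee_g \neg(\neg A(g)\implies gRm) = \bigvee_g(\neg A(g)\wedge \neg gRm) = \bigvee_g(\neg A(g)\wedge g\oR m),\]
which by Lemma \ref{lm:upper-nabla} is $(-)^\nabla$ evaluated at $A$.

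The main obstacle is mostly bookkeeping. First, the codomain is $[M,\2]\op$, so I must check that the outer $\neg$ is applied to the subset $A^\Pi\subseteq M$ as a set, not as an order-reversed object; since negation is an involution, the direction of the order does not affect the pointwise identity. Second, the identities of the Introduction define $(-)^N$ and $(-)^\nabla$ with $\neg$ precomposed, so the lemma amounts to $\neg R^!\neg\neg = \oR^*\neg$. This reduces to what was shown above because $\neg\neg=\mathrm{id}$.
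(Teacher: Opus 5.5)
Your proof is correct and is essentially the paper's argument. The paper first establishes $\oR^* = \neg R^!$ and $R^* = \neg \oR^!$ by the same pointwise De Morgan manipulations in $\2$ and then precomposes with $\neg$, whereas you compute $\neg(\neg A)^\Pi$ and $\neg(\neg A)^\Delta$ directly and match them with the formulas for $(-)^N$ and $(-)^\nabla$.

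There is one slip in your last paragraph. The reduction should read $\neg R^!\neg = \oR^*\neg$, as you correctly wrote earlier. Read literally, $\neg R^!\neg\neg = \oR^*\neg$ says $\neg R^! = \oR^*\neg$, which is false in general. Your actual computations do not depend on that remark, so the proof itself stands.
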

\begin{proof}
    We only prove the relations for the upper-case operators, as the others follow by symmetry. 

    Let $A\in [G\op, \2]$ be a presheaf. By definition we have 
    \[\oR^*(A)(m) = \bigwedge_{g\in G}(A(g)\implies g\oR m).\]
    Using $g \oR m = \neg gRm$ and the boolean equivalence $P\implies \neg Q \iff \neg (P\wedge Q)$, we have 
    \[\oR^*(A)(m) = \bigwedge_{g\in G}\neg (A(g)\wedge g R m).\]
    This is, by De Morgan duality, equivalent to 
    \[\oR^*(A)(m) = \neg\bigvee_{g\in G}(A(g)\wedge g R m),\]
    which we can recognize as $\neg R^!$. 

    By \cref{lm:upper-N} and \cref{lm:upper-Pi} we have $(-)^N = \oR^*\neg$ and $(-)^\Pi = R^!$, giving 
    \[(-)^N = \oR^*\neg = \neg R^!\neg = \neg (-)^\Pi\neg,\]
    proving the first duality. 

    For the second claim we have by definition that 
    \[R^*(A)(m) = \bigwedge_{g\in G}(A(g)\implies gRm),\]
    which by the equivalence $P\implies Q \iff \neg P\vee Q$ gives 
    \[R^*(A)(m) = \bigwedge_{g\in G}(\neg A(g)\vee gRm).\]
    By De Morgan duality, we have a boolean equality $\neg A(g)\vee gRm = \neg(A(g)\wedge g\oR m)$, which by another use of De Morgan duality gives 
    \[R^*(A)(m) = \neg\bigvee_{g\in G}(A(g)\wedge g\oR m),\]
    which by definition is $\neg \oR^!(A)(m).$ Hence, $R^* = \neg \oR^!$. To conclude then, we have
    \[(-)^\Delta = R^* =\neg\oR^! = \neg \oR^! \neg\neg = \neg (-)^\nabla \neg\] 
    which finishes the proof. 
\end{proof}

\begin{remark}
    Purely in terms of the Kan extensions, and not the possibilistic operators, these dualities are: 
    \begin{multicols}{2}
    \begin{enumerate}
        \item $R^* = \neg \oR^!$
        \item $R_* = \neg \oR_!$
        \item $R^! = \neg \oR^*$
        \item $R_! = \neg \oR_*$
    \end{enumerate}
    \end{multicols}
\end{remark}

The following reproves \cite[5.1]{dubois-dupin-prade_2007}. 

\begin{proposition}
    Let $(G,M,R)$ be a formal context. For all subsets $A_1, A_2$ of $G$, we have: 
    \begin{enumerate}
        \item $(A_1\cup A_2)^\Delta = A_1^\Delta \cap A_2^\Delta$,
        \item $(A_1\cap A_2)^N = A_1^N \cap A_2^N$,
        \item $(A_1\cup A_2)^\Pi = A_1^\Pi \cup A_2^\Pi$, and
        \item $(A_1\cap A_2)^\nabla = A_1^\nabla \cup A_2^\nabla$.
    \end{enumerate}
\end{proposition}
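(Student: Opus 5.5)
The plan is to derive all four identities from two categorical facts: adjoints preserve (co)limits, and the De Morgan dualities of \cref{lm:De-Morgan-duality} convert statements about $\Pi$ and $\Delta$ into statements about $N$ and $\nabla$.

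First consider $(-)^\Delta = R^*$. By \cref{lm:adjoint-Delta}, $R^*\colon [G\op,\2]\to[M,\2]\op$ is a left adjoint, so it preserves colimits. A binary coproduct in $P(G)=[G\op,\2]$ is the union $A_1\cup A_2$. A coproduct in $[M,\2]\op$ is a product in $P(M)$, which is the intersection. Hence $R^*(A_1\cup A_2)=R^*(A_1)\cap R^*(A_2)$, giving item (1).

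For item (3), I would avoid relying on the adjunction $(-)^\Pi\dashv(-)_N$, since the paper has only announced it and not yet proved it. Instead I would use that $R^!=\Lan_y(R^-)$ is a pointwise left Kan extension along the Yoneda embedding into a cocomplete category. Such an extension is cocontinuous, since it is the unique colimit-preserving extension of $R^-$ along $y$. This can also be seen directly from the coend formula in \cref{lm:upper-Pi}: the disjunction $\bigvee_g (A(g)\wedge gRm)$ distributes over $A_1(g)\vee A_2(g)$. One point needs care here. The codomain is written as $[M,\2]\op$, but the formula in \cref{lm:upper-Pi} produces joins in $P(M)$ itself. I would therefore state explicitly that the relevant colimit is taken in $P(M)$ and is the union, so $(A_1\cup A_2)^\Pi=A_1^\Pi\cup A_2^\Pi$. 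This reconciliation of variance conventions is the main thing to handle carefully. The algebra itself is trivial.

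Items (2) and (4) then follow by De Morgan duality via \cref{lm:De-Morgan-duality}. For (2):
\[(A_1\cap A_2)^N=\neg(\neg A_1\cup\neg A_2)^\Pi=\neg(\neg A_1^{\,}{}^\Pi\cup\neg A_2^{\,}{}^\Pi)=A_1^N\cap A_2^N,\]
using (3) in the middle step. For (4):
\[(A_1\cap A_2)^\nabla=\neg(\neg A_1\cup\neg A_2)^\Delta=\neg\bigl((\neg A_1)^\Delta\cap(\neg A_2)^\Delta\bigr)=A_1^\nabla\cup A_2^\nabla,\]
using (1) in the middle step.

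The only identities needed beyond these are that $\neg$ is an involution on $P(G)$ and $P(M)$ interchanging $\cup$ and $\cap$. These are standard Boolean facts.
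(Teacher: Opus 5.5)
Your proof is correct, but it splits the work differently from the paper. The paper proves (1) and (2) directly and reads off (3) and (4) by De Morgan duality. For (2) it writes $(-)^N=\oR^*\neg$: the involution $\neg$ turns $A_1\cap A_2$ into $\neg A_1\cup\neg A_2$, and $\oR^*$, the derivation operator of the complement context, is a left adjoint into $[M,\2]\op$ exactly as in \cref{lm:adjoint-Delta}, so it sends that union to an intersection. (3) is then the De Morgan dual of (2).

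You instead prove (3) directly and deduce (2) from it. The paper's route is more uniform. Every item reduces to one fact: the derivation operators of $R$ and of $\oR$ are left adjoints $P(G)\to[M,\2]\op$. It never needs to say anything about what $R^!$ preserves. Your route is more elementary for (3), since it is just distributivity of $\bigvee_g$ over $\vee$.

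However, the categorical version of your argument depends on the retyping you flag, and that caution matters. A cocontinuous functor into $[M,\2]\op$ sends unions to intersections. Taking $R^!$ literally with the codomain the paper assigns it would therefore ``prove'' the false identity $(A_1\cup A_2)^\Pi=A_1^\Pi\cap A_2^\Pi$. In your write-up the pointwise distributivity computation should carry the argument. The Kan-extension remark is valid only when $R^-$ is regarded as a functor $G\to[M,\2]=P(M)$.

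Finally, in your display for (2), write the middle term as $\neg\bigl((\neg A_1)^\Pi\cup(\neg A_2)^\Pi\bigr)$. As typeset, the inner terms read like $\neg(A_i^\Pi)$, and under that reading the last equality fails.
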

\begin{proof}
    We prove these one by one in the order presented. 
    \begin{enumerate}
        \item As $(-)^\Delta\: [G\op, \2]\to [M,\2]\op$ is a left adjoint it preserves colimits, which in $[G\op,\2]$ are unions and in $[M,\2]\op$ are intersections. 
        \item The functor $(-)^N = \oR^*\neg$ is the dual of a left adjoint. As $\neg\: [G\op, \2]\to [G\op, \2]\op$ is an involution, it preserves both limits and colimits. Hence, intersections, which are limits in $[G\op, \2]$, are sent to colimits in $[M,\2]\op$, which are also intersections.
        \item Follows from (2) and the De Morgan duality of \cref{lm:De-Morgan-duality}.
        \item Follows from (1) and \cref{lm:De-Morgan-duality}. \qedhere
    \end{enumerate}
\end{proof}

We have already seen that the two functors $R^*$ and $R_*$ formed an adjoint pair, giving a categorical definition of formal concepts as the Eilenberg--Moore category of the associated idempotent monad. Let us also investigate the other operators. 

\begin{lemma}
    \label{lm:adjoint-Pi-N}
    The two possibilistic operators $(-)^\Pi$ and $(-)_N$ form an adjoint pair
    \begin{center}
    \begin{tikzcd}
    {[G\op, \2]} \arrow[r, "(-)^\Pi", yshift=2pt] & {[M, \2]\op} \arrow[l, "(-)_N", yshift=-2pt].
    \end{tikzcd}
    \end{center}
\end{lemma}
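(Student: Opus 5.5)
The plan is to copy the pointwise residuation argument of \cref{lm:adjoint-Delta}, now with a join on the left in place of a meet. By \cref{lm:upper-Pi} we have $(-)^\Pi = R^!$, and by \cref{lm:lower-N} we have $(-)_N = \oR_*\neg$. First I will write out the pointwise formulas:
\[A^\Pi(m) = \bigvee_{g\in G}(A(g)\wedge gRm), \qquad B_N(g) = \bigwedge_{m\in M}(gRm \implies B(m)).\]
The second formula comes from $\oR_*(\neg B)(g)=\bigwedge_m(\neg B(m)\implies \neg gRm)$ and contraposition, exactly as in \cref{lm:upper-N}. The claim to prove is then the hom-set bijection $A^\Pi \leq B \iff A \leq B_N$ for all $A\in P(G)$ and $B\in P(M)$.

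The chain of equivalences runs as follows. The inequality $A^\Pi\leq B$ holds if and only if $\bigvee_g (A(g)\wedge gRm)\leq B(m)$ for every $m$. Since a join lies below an element exactly when each joinand does, this is equivalent to $A(g)\wedge gRm \leq B(m)$ for all $g$ and $m$. By residuation in $\2$ (the adjunction $-\wedge gRm \dashv gRm\implies -$), this becomes $A(g)\leq (gRm\implies B(m))$ for all $g,m$. Collecting over $m$ into a meet gives $A(g)\leq \bigwedge_m(gRm\implies B(m)) = B_N(g)$ for all $g$, that is, $A\leq B_N$. All of these are equivalences, so the adjunction follows.

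The only real obstacle is the variance bookkeeping, and I would settle it first. The displayed diagram draws the codomain as $[M,\2]\op$, but $(-)^\Pi$ preserves unions, so the order in which the inequalities above are read must be inclusion of subsets of $M$. That is, $A^\Pi\leq B$ means $A^\Pi\subseteq B$. This is the reading under which $(-)^\Pi$ is the left adjoint. In contrast, for $R^*$ the inclusion was reversed, which is why the proof of \cref{lm:adjoint-Delta} worked with $R^*(A)\le B$ in the opposite category. I will state this convention explicitly, and check it on the sanity case where $(-)^\Pi$ preserves unions, consistent with item (3) of the preceding proposition.

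As a cross-check, I would also note that the adjunction can be derived abstractly from \cref{lm:adjoint-Delta} applied to $\oR$, combined with the De Morgan dualities $R^!=\neg\oR^*$ and $(-)_N=\oR_*\neg$. Conjugating the adjunction $\oR^*\dashv\oR_*$ by the order-reversing involution $\neg$ flips the variance and yields $\neg\oR^*\neg\neg \dashv \neg\neg\oR_*\neg$. I would only use this as a remark, since the direct computation is cleaner.
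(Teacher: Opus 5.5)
Your proposal is correct and follows essentially the same route as the paper's proof. The paper also unfolds $A^\Pi\le B$ pointwise, splits the join into its joinands, rewrites $A(g)\wedge gRm\le B(m)$ by residuation as $A(g)\le (gRm\implies B(m))$, and collects the meet over $m$ to get $A\le B_N$. Your variance remark is right and worth keeping: the paper's computation likewise reads $A^\Pi\le B$ as inclusion in $P(M)$, so the adjunction actually lives on $[M,\2]$ rather than on the $[M,\2]\op$ drawn in the diagram.
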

\begin{proof}
    We need to show that for all $A\in [G\op,\2]$ and $B\in [M,\2]\op$ we have $A\leq B_N$ if and only if $A^\Pi\leq B$. 
    
    By definition, $A^\Pi\leq B$ holds if and only if for all attributes $m\in M$ we have $\bigvee_g(A(g)\wedge gRm)\implies B(m)$ -- this is just the pointwise condition. 

    Since implication out of a join is equivalent to
    implication out of each summand, this holds if and only if
    \[\forall g\in G,\;\forall m\in M,\;(A(g)\wedge gRm)\implies B(m)\]

    Rewriting the implication, we get
    \[\forall g\in G,\;A(g)\implies (gRm\implies B(m))\]
    for all $m\in M$, and therefore
    \[\forall g\in G,\;A(g)\implies\bigwedge_{m\in M}(gRm\implies B(m)),\]
    which is the pointwise definition of $(-)_N$. This gives $A^\Pi\leq B$ if and only if $A\leq B_N$, proving that we have an adjunction $(-)^\Pi\dashv (-)_N$.
\end{proof}

From this we automatically obtain one of the main results from \cite{yakub-djouadi-dubois-prade_2016}. The categorical viewpoint reveals, in our opinion, more clearly why this results appears. This was one of our motivations for this paper. 

\begin{corollary}
    An object $A \in [G\op, \2]$ is a fixed point of $(-)_\Pi (-)^N$, i.e. it is a part of an $N\Pi$-pair, if and only if $\neg A$ is a formal concept of $(G,M,\oR)$.
\end{corollary}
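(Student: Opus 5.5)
The plan is to rewrite the composite $(-)_\Pi(-)^N$ entirely in terms of the Kan extensions of the complement profunctor $\oR$. We then observe that it is conjugate, via the involution $\neg$, to the nucleus monad $\oR_*\oR^*$ of the complement context. No new computation should be needed beyond the identities already established.

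\textbf{Step 1.} By \cref{lm:upper-N} we have $(-)^N = \oR^*\neg$. By \cref{lm:lower-Pi}, $(-)_\Pi = R_!$, and item 4 of the remark following \cref{lm:De-Morgan-duality} gives $R_! = \neg\oR_*$. Composing, for every $A\in[G\op,\2]$,
\[ (A^N)_\Pi = \neg\,\oR_*\oR^*(\neg A). \]
Item 4 of the remark was only stated ``by symmetry''. To be safe, I would verify it directly, by the same computation as in the proof of \cref{lm:De-Morgan-duality} with the roles of $G$ and $M$ swapped:
\[ \oR_*(B)(g) = \bigwedge_m (B(m)\implies \neg gRm) = \neg\bigvee_m (B(m)\wedge gRm) = \neg R_!(B)(g), \]
so $\oR_* = \neg R_!$, and hence $R_! = \neg\oR_*$ since $\neg$ is an involution.

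\textbf{Step 2.} Since $\neg$ is an involution on $P(G)$, the equation $A = \neg\,\oR_*\oR^*(\neg A)$ holds if and only if $\neg A = \oR_*\oR^*(\neg A)$. In words, $A$ is a fixed point of $(-)_\Pi(-)^N$ exactly when $\neg A$ is a fixed point of the monad $\oR_*\oR^*$.

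\textbf{Step 3.} Apply the discussion after \cref{lm:adjoint-Delta} to the boolean profunctor $\oR$. By \cref{lm:upper-Delta,lm:lower-Delta,lm:adjoint-Delta} for $\oR$, the fixed points of $\oR_*\oR^*$ are exactly the extents of the formal concepts of $(G,M,\oR)$. The accompanying intent is determined as $\oR^*(\neg A) = A^N$. This recovers the $N\Pi$-pair $(A, A^N)$ as the complement of the concept $(\neg A, \oR^*(\neg A))$, and it matches the paper's convention of identifying a concept with its extent.

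The only real obstacle is bookkeeping. We must keep track of which side of each operator the negations sit on (right duals versus left duals). We must also check that the remark's duality $R_! = \neg\oR_*$ holds with the correct variance, which is why I verify it explicitly in Step 1. Once the conjugation $(-)_\Pi(-)^N = \neg\,\oR_*\oR^*\,\neg$ is in hand, the rest follows formally.
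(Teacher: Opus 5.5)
Your proposal is correct and follows essentially the same route as the paper: both identify $(-)_\Pi(-)^N$ with the conjugate $\neg\,\oR_*\oR^*\,\neg$ of the complement context's nucleus monad and then use that $\neg$ is an involution. The only difference is cosmetic: you invoke $R_! = \neg\oR_*$ directly and verify it pointwise, whereas the paper rewrites $\neg\oR_*\neg\,\neg\oR^*\neg = \neg(-)_N\neg\,(-)^N$ and applies the De Morgan duality $(-)_\Pi = \neg(-)_N\neg$ (and your Step 2 states the fixed-point equivalence $A = \neg\oR_*\oR^*(\neg A) \iff \neg A = \oR_*\oR^*(\neg A)$ with the negations placed more cleanly than the paper's displayed equation).
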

\begin{proof}
    A formal concept in $(G,M,\oR)$ is a fixedpoint of $\oR_*\oR^*$. By inserting negations, this is equivalent to having $\neg A = \neg\oR_*\oR^* \neg (\neg A)$. Moreover, we have $\neg\oR_*\oR^*\neg = \neg\oR_* \neg\neg\oR^*\neg$, which by the De Morgan duality of \cref{lm:De-Morgan-duality} gives 
    \[\neg\oR_* \neg \neg \oR^*\neg = \neg (-)_N \neg (-)^N = (-)_\Pi (-)^N,\]
    showing that the $N\Pi$-pairs are exactly the formal concepts on the complement context. 
\end{proof}

\begin{remark}
    Completely analogously, we have that $A$ is part of a $\Pi N$-pair if and only if $A$ is a formal concept of $(G,M,\oR)$. This is also obtained by De Morgan duality: 
    \[\oR_* \oR^*= \oR_*\neg\neg \oR^* \neg\neg= (-)_N \neg (-)^N\neg = (-)_N (-)^\Pi.\]
\end{remark}

The other two functors $(-)^N$ and $(-)_\Pi$, as well as the functors $(-)^\nabla$ and $(-)_\nabla$, also form adjoint pairs. This follows from the De Morgan dualities of \cref{lm:De-Morgan-duality} together with the following lemma. 

\begin{lemma}
    Let $\C$ and $\D$ be lattices with an order-reversing involution $\neg$. If $F\dashv G\: \D \rightleftarrows \C$ is an adjoint pair of functors, then their De Morgan duals form an adjoint pair $\neg G\neg \dashv \neg F\neg$. 
\end{lemma}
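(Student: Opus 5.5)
The plan is to check the adjunction directly as the posetal hom-set condition, using only that $\neg$ is an order-reversing involution on both $\C$ and $\D$. Fix the convention that $F\colon \D\to\C$ and $G\colon \C\to\D$ with $F\dashv G$. This means that for all $d\in\D$ and $c\in\C$ we have $F(d)\leq c$ if and only if $d\leq G(c)$. The De Morgan duals are then $\neg G\neg\colon \D\to\C$ and $\neg F\neg\colon \C\to\D$. Here the inner $\neg$ is the involution on the domain and the outer $\neg$ is the involution on the codomain, so the types match the claimed adjunction $\neg G\neg\dashv\neg F\neg$. We must therefore show, for all $x\in\C$ and $y\in\D$, that $\neg G(\neg x)\leq y$ holds if and only if $x\leq \neg F(\neg y)$.

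The argument is a short chain of equivalences.
\begin{enumerate}
    \item Since $\neg$ is order-reversing and involutive, $a\leq b$ if and only if $\neg b\leq\neg a$. Hence $\neg G(\neg x)\leq y$ is equivalent to $\neg y\leq \neg\neg G(\neg x)=G(\neg x)$.
    \item Applying the adjunction $F\dashv G$ with $d=\neg y$ and $c=\neg x$, this is equivalent to $F(\neg y)\leq\neg x$.
    \item Applying $\neg$ again and using $\neg\neg=\mathrm{id}$, this is equivalent to $x\leq\neg F(\neg y)$.
\end{enumerate}
This is exactly the required condition. Since the categories are posets, no naturality condition needs checking, so this chain establishes the adjunction.

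The only real obstacle is bookkeeping: $\C$ and $\D$ each carry their own involution, and we must keep track of which involution is applied where so that the composites have the right domains and codomains. In the intended application the relevant categories are $[G\op,\2]$ and $[M,\2]\op$. There the involution on the opposite category is still pointwise complement, and it remains order-reversing for the reversed order. It is worth a remark that the lemma applies verbatim in that setting.

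Combining the lemma with \cref{lm:De-Morgan-duality} then yields the remaining adjunctions. From $(-)^\Pi\dashv(-)_N$ (\cref{lm:adjoint-Pi-N}) we get $\neg(-)_N\neg\dashv\neg(-)^\Pi\neg$, that is, $(-)_\Pi\dashv(-)^N$. From $(-)^\Delta\dashv(-)_\Delta$ (\cref{lm:adjoint-Delta}) we get $\neg(-)_\Delta\neg\dashv\neg(-)^\Delta\neg$, that is, $(-)_\nabla\dashv(-)^\nabla$.
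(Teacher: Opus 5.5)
Your proof is correct and is the paper's argument: negate to get $\neg y\leq G(\neg x)$, transpose across $F\dashv G$ to get $F(\neg y)\leq\neg x$, and negate back. One small slip is that, since $G\colon\C\to\D$, the duals are typed $\neg G\neg\colon\C\to\D$ and $\neg F\neg\colon\D\to\C$, not the other way round; this is in fact the typing your chain of equivalences with $x\in\C$, $y\in\D$ already uses.
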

\begin{proof}
    Assume $\neg G(\neg B)\leq A$. As $\neg$ is order-reversing, this is equivalent to $\neg A \leq  G(\neg B)$. As $F$ and $G$ are adjoint, this is equivalent to $F(\neg A) \leq \neg B$, which by the order-reversing negation gives finally $B \leq \neg F(\neg A)$.
\end{proof}

This immediately gives the following two corollaries from \cref{lm:De-Morgan-duality} together with \cref{lm:adjoint-N-Pi} and \cref{lm:adjoint-Delta} respectively. 

\begin{corollary}
    \label{lm:adjoint-N-Pi}
    The two possibilistic operators $(-)^N$ and $(-)_\Pi$ form an adjoint pair $(-)_\Pi \dashv (-)^N$.
\end{corollary}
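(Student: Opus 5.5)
The plan is to derive the adjunction $(-)_\Pi \dashv (-)^N$ from the adjunction $(-)^\Pi \dashv (-)_N$ of \cref{lm:adjoint-Pi-N}, by passing to De Morgan duals with the preceding lemma. The one thing to be careful about is the direction of the functors, because $(-)_\Pi$ goes from attributes to objects. I will therefore read the adjunction in the variance inherited from the Kan extension picture: $(-)_\Pi = R_!$ is a functor $[M,\2]\op \to [G\op,\2]$, and $(-)^N$ is a functor $[G\op,\2]\to[M,\2]\op$.

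Concretely, I would take $F = (-)^\Pi$ and $G = (-)_N$, with $F \dashv G$ between $[G\op,\2]$ and $[M,\2]\op$. Both lattices carry the pointwise order-reversing involution $\neg$. The lemma then gives $\neg G \neg \dashv \neg F \neg$, that is, $\neg (-)_N \neg \dashv \neg (-)^\Pi \neg$.

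By \cref{lm:De-Morgan-duality}, $\neg(-)^\Pi\neg = (-)^N$. The lower-case version of the same lemma gives $(-)_N = \neg(-)_\Pi\neg$, and since $\neg\neg = \mathrm{id}$ this yields $\neg(-)_N\neg = (-)_\Pi$. Substituting both identities into the dual adjunction gives exactly $(-)_\Pi \dashv (-)^N$.

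The main obstacle is bookkeeping rather than mathematics. The lemma on De Morgan duals is stated with $\neg$ as an order-reversing involution on each lattice, and its proof implicitly moves between $P(M)$ and $P(M)\op$. I would therefore check the variance directly at the level of subsets: the claim should amount to $B_\Pi \subseteq A \iff B \subseteq A^N$ for $A\subseteq G$ and $B\subseteq M$.

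As a sanity check, I would verify this pointwise. The condition $B_\Pi\subseteq A$ says that for all $g$ and $m$, $(B(m)\wedge gRm)\implies A(g)$. By currying, this is equivalent to: for all $m$, $B(m)\implies \bigwedge_g (gRm\implies A(g))$, which is $B\subseteq A^N$. This direct computation confirms the direction of the adjunction. It could even replace the duality argument if the variance conventions in the lemma turn out to be awkward.
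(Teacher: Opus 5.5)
Your proposal is correct and matches the paper's argument: apply the De Morgan-dual-adjunction lemma to $(-)^\Pi\dashv(-)_N$, then rewrite $\neg(-)_N\neg=(-)_\Pi$ and $\neg(-)^\Pi\neg=(-)^N$ using the De Morgan duality lemma. Your pointwise check $B_\Pi\subseteq A\iff B\subseteq A^N$ is also right, and it shows the adjunction lives between $P(M)$ and $P(G)$ with ordinary inclusion on both sides, so drop the $[M,\2]\op$ variance you announce at the start: read there, the claim becomes $B_\Pi\subseteq A\iff A^N\subseteq B$, which fails for $B=M$, $A=\varnothing$ in any context with some $gRm$. The duality argument runs unchanged with complement as the involution on $P(G)$ and $P(M)$.
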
 

\begin{corollary}
    \label{lm:adjoint-nabla}
    The two possibilistic operators $(-)^\nabla$ and $(-)_\nabla$ form an adjoint pair $(-)_\nabla \dashv (-)^\nabla$.
\end{corollary}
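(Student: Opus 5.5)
We obtain the adjunction $(-)_\nabla\dashv(-)^\nabla$ by applying the preceding De Morgan lemma to the adjunction $R^*\dashv R_*$ of \cref{lm:adjoint-Delta}, i.e.\ $(-)^\Delta\dashv(-)_\Delta$. That lemma turns an adjoint pair $F\dashv G$ into the dual pair $\neg G\neg\dashv\neg F\neg$. Here $\neg$ is the pointwise complement on $P(G)$ and on $P(M)$, which is order-reversing and involutive. Taking $F=(-)^\Delta$ and $G=(-)_\Delta$ gives
\[\neg(-)_\Delta\neg\dashv\neg(-)^\Delta\neg .\]

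The next step is to identify these two duals with the $\nabla$-operators. By \cref{lm:De-Morgan-duality} we have $(-)^\Delta=\neg(-)^\nabla\neg$, and since $\neg$ is an involution this gives $(-)^\nabla=\neg(-)^\Delta\neg$. The lower-case version, which \cref{lm:De-Morgan-duality} asserts "by symmetry", gives $(-)_\nabla=\neg(-)_\Delta\neg$. In Kan-extension terms, the Remark gives $R_*=\neg\oR_!$, hence $\neg R_*\neg=\oR_!\neg$, which is $(-)_\nabla$ by \cref{lm:lower-nabla}. Substituting these identifications turns the displayed adjunction into $(-)_\nabla\dashv(-)^\nabla$.

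The main obstacle is bookkeeping of variance. \cref{lm:adjoint-Delta} is an adjunction between $[G\op,\2]$ and $[M,\2]\op$, with $(-)^\Delta$ as the left adjoint into the \emph{opposite} of $P(M)$. I would check how the negations act on these opposite categories: $\neg$ on $[M,\2]\op$ is still an order-reversing involution, so the hypotheses of the De Morgan lemma hold with $\C=[G\op,\2]$ and $\D=[M,\2]\op$.

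I would then check which direction the resulting adjunction runs. Unwinding the definitions, $B_\nabla\le A$ in $P(G)$ should be equivalent to $B\le A^\nabla$ in $P(M)$. To confirm this, I would trace it pointwise with the formulas of \cref{lm:upper-nabla,lm:lower-nabla}. Both conditions reduce to
\[\forall g,m:\ \neg B(m)\wedge g\oR m\implies A(g),\]
wait—more precisely, to the single condition that $\neg A(g)\wedge g\oR m\wedge\neg B(m)$ never holds. This symmetric condition confirms the adjunction in the stated direction, with $(-)_\nabla$ as the left adjoint.
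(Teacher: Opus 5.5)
Your proposal is correct and follows the paper's route: the paper applies the De Morgan lemma to $(-)^\Delta\dashv(-)_\Delta$ from \cref{lm:adjoint-Delta} and identifies $\neg(-)_\Delta\neg=(-)_\nabla$ and $\neg(-)^\Delta\neg=(-)^\nabla$ via \cref{lm:De-Morgan-duality}, which is what you do. One small variance slip in your pointwise sanity check: the right-hand condition should be $B\le A^\nabla$ in $[M,\2]\op$, i.e.\ $A^\nabla\subseteq B$ in $P(M)$, because that is what unwinds to your symmetric condition, whereas the inclusion $B\subseteq A^\nabla$ does not.
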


We then get the usual inclusion properties of the possibilistic operators from standard categorical observations. 

\begin{proposition}
    \label{prop:closure-interior}
    Let $(G,M,R)$ be a formal context. For all subsets $A$ of $G$, we have: 
    \begin{multicols}{2}
    \begin{enumerate}
        \item $A\leq A^\Delta_\Delta$
        \item $A\leq A^\Pi_N$
        \item $A^N_\Pi \leq A$
        \item $A^\nabla_\nabla \leq A$
    \end{enumerate}
    \end{multicols}
\end{proposition}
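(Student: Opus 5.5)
The plan is to read all four inequalities as unit and counit components of the four adjunctions established above. No pointwise computation should be needed beyond keeping track of the direction of the adjunctions and of the $\op$ on $[M,\2]\op$. For any adjunction $F\dashv U$ between posets, the unit gives $X\leq UF(X)$ on the domain of $F$, and the counit gives $FU(Y)\leq Y$ on the domain of $U$. Since the relevant categories are posets, these are plain inequalities in $P(G)$.

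For (1), I will use \cref{lm:adjoint-Delta}: $(-)^\Delta\dashv(-)_\Delta$ with $(-)^\Delta\: [G\op,\2]\to[M,\2]\op$. The unit at $A\in[G\op,\2]$ is exactly $A\leq (A^\Delta)_\Delta$. For (2), \cref{lm:adjoint-Pi-N} gives $(-)^\Pi\dashv(-)_N$ with the same variance, and its unit at $A$ gives $A\leq (A^\Pi)_N$.

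For (3), I will use \cref{lm:adjoint-N-Pi}: $(-)_\Pi\dashv(-)^N$. Here $(-)^N$ is the right adjoint, with values in $P(G)$, and $(-)_\Pi$ lands back in $[G\op,\2]$. The counit at $A\in[G\op,\2]$ is then $(A^N)_\Pi\leq A$. For (4), \cref{lm:adjoint-nabla} gives $(-)_\nabla\dashv(-)^\nabla$, so the counit at $A$ gives $(A^\nabla)_\nabla\leq A$.

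The only real obstacle is the variance bookkeeping. For (3) and (4) one must check that the De Morgan dual adjunctions obtained from the lemma on order-reversing involutions are adjunctions with $[G\op,\2]$ as the codomain of the left adjoint. Otherwise the counit inequality would come out reversed. I would first verify this by unwinding the lemma directly. From $(-)^\Pi\dashv(-)_N$ and the dualities $(-)^N=\neg(-)^\Pi\neg$ and $(-)_\Pi=\neg(-)_N\neg$ of \cref{lm:De-Morgan-duality}, the lemma yields $(-)_\Pi=\neg(-)_N\neg \dashv \neg(-)^\Pi\neg=(-)^N$. In this adjunction $(-)^N$ maps $P(G)$ to the attribute side and $(-)_\Pi$ returns to $P(G)$, so the counit lives in $P(G)$ as needed.

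As a sanity check independent of the variance conventions, I can derive (3) from (2) directly:
\[(A^N)_\Pi=\neg\big((\neg A)^\Pi\big)_N\]
using \cref{lm:De-Morgan-duality}. By (2), $\neg A\leq ((\neg A)^\Pi)_N$, and applying the order-reversing $\neg$ gives $(A^N)_\Pi\leq A$. In the same way, (4) follows from (1) via $(-)^\nabla=\neg(-)^\Delta\neg$ and $(-)_\nabla=\neg(-)_\Delta\neg$. I would present this derivation as the proof if the variance check became murky.
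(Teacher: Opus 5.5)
Your proposal is correct and is essentially the paper's own proof: (1) and (2) are the units at $A$ of $(-)^\Delta\dashv(-)_\Delta$ and $(-)^\Pi\dashv(-)_N$, (3) and (4) are the counits at $A$ of $(-)_\Pi\dashv(-)^N$ and $(-)_\nabla\dashv(-)^\nabla$, and your De Morgan derivation of (3) and (4) from (2) and (1) is a valid extra check. Two harmless slips: $(-)^N$ is defined on $P(G)$ but takes values in $P(M)$ (not ``values in $P(G)$''), and $(-)^\Pi$ is monotone into $P(M)$ with the inclusion order, so $(-)^\Pi\dashv(-)_N$ does not really have ``the same variance'' as the $\Delta$ pair (the paper's own statement of that lemma has the same issue), but the unit $A\subseteq(A^\Pi)_N$ you need is unaffected.
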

\begin{proof}
    Again we prove these in the order presented. 
    \begin{enumerate}
        \item Follows immediately from the unit of the adjunction $\Id \leq (-)_\Delta (-)^\Delta$. 
        \item Same as (1), it follows from the adjunction unit of $(-)^\Pi \dashv (-)_N$. 
        \item This follows now immediately from the adjunction counit $(-)_\Pi (-)^N \leq \Id$. 
        \item As for (3), this follows from the adjunction counit of the adjunction $(-)_\nabla \dashv (-)^\nabla$. \qedhere
    \end{enumerate}
\end{proof}

\begin{remark}
    The fact that there is a sequence of natural transformations $(-)_\Pi (-)^N \leq \Id \leq (-)_N (-)^\Pi$ was used by Yao in \cite{yao_2004} to introduce rough set structure on formal contexts -- though not phrased in the categorical language.
\end{remark}

\section{Compositions and closures}

We have now constructed the eight possibilistic operators in formal concept analysis, as studied in \cite{dubois-dupin-prade_2007, dubois-prade_2009}, by standard categorical constructions associated to the underlying boolean profunctor $R\: G \nrightarrow G$. It remains to study the different possible compositions of upper-case and lower-case possibilistic operators. 

Given our eight operators, there are $16$ possible compositions that produce an endofunctor on $[G\op, \2]$. As we only consider endofunctors on $[G\op, \2]$ -- the ones on $[M,\2]\op$ are completely analogous -- we will use the combined notation $(-)_\sigma (-)^\rho := (-)^\sigma_\rho$ for simplicity. We then get the following table of compositions:

\begin{table}[ht]
\centering
\[
\begin{array}{c|cccc}
 & (-)_\Delta & (-)_\Pi & (-)_N & (-)_\nabla \\ \hline
(-)^\Delta
    & (-)^\Delta_\Delta
    & (-)^\Delta_\Pi
    & (-)^\Delta_N
    & (-)^\Delta_\nabla \\[0.4em]
(-)^\Pi
    & (-)^\Pi_\Delta
    & (-)^\Pi_\Pi
    & (-)^\Pi_N
    & (-)^\Pi_\nabla \\[0.4em]
(-)^N
    & (-)^N_\Delta
    & (-)^N_\Pi
    & (-)^N_N
    & (-)^N_\nabla \\[0.4em]
(-)^\nabla
    & (-)^\nabla_\Delta
    & (-)^\nabla_\Pi
    & (-)^\nabla_N
    & (-)^\nabla_\nabla
\end{array}
\]
\caption{The sixteen possible binary compositions.}
\label{tab:possibilistic-compositions}
\end{table}

The goal of this section is to prove that only three of the $16$ compositions that form closure operators on $P(G)$. First, we introduce some theory that allows us to prove this. 

\subsection{Idempotency and closure}

The standard FCA operator $(-)^\Delta_\Delta$ is a so-called closure operator, defined as follows. 

\begin{definition}
    A functor $F\: \C\to \C$ on a poset $\C$ is a \emph{closure operator} if it is extensive, monotone and idempotent, meaning it satisfies the following axioms:
\begin{enumerate}
    \item $A\leq F(A)$,
    \item $A\leq A' \implies F(A)\leq F(A')$, and
    \item $FF(A) = F(A)$.
\end{enumerate}
\end{definition}

The FCA adjunction coming from a formal context $(G,M,R)$ satisfies all three of these: by \cite[2.1.11]{jarvis_2025}, the monad $R_*R^*$ is idempotent, and the two other properties follow formally by the adjunction unit and the fact that it is a functor on posets. Thus, on a poset, any monadic functor determines a closure operator, and conversely, any closure operator is the monad of an adjunction -- see \cite[Chapter 7]{davey-priestley_2002} for details. 

Consequently, in order to be recognized as a closure operator on \emph{some} formal context, these three axioms have to be fulfilled. Otherwise the resulting fixedpoints have no chance of being formal concepts, or concepts of any type. In light of this discussion, the following proposition is rather trivial, but we state it for reference later.

\begin{proposition}
    \label{prop:axioms_for_closure}
    Let $L\: \C \to \C$ be an endofunctor on a poset. If $L$ fails to be extensive, monotone or idempotent, then it cannot be the closure operator of a formal context. 
\end{proposition}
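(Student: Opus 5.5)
We prove the contrapositive. Suppose $L\colon \C\to\C$ is the closure operator of some formal context $(G,M,R)$; we show that $L$ is extensive, monotone and idempotent. Any of the three failures then rules $L$ out. So the whole argument is to confirm that $L = R_*R^*$ has all three properties, using only what has been established above.

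\emph{Monotonicity.} Both $R^*$ and $R_*$ are functors between posets, by \cref{lm:upper-Delta} and \cref{lm:lower-Delta}, and $R^*\colon[G\op,\2]\to[M,\2]\op$ is order-preserving into the opposite order. A composite of functors is a functor, and a functor between posets is exactly a monotone map. Hence $R_*R^*$ is monotone.

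\emph{Extensivity.} By \cref{lm:adjoint-Delta} we have the adjunction $R^*\dashv R_*$. Its unit gives $A\le R_*R^*(A)$ for every $A$, which is item (1) of \cref{prop:closure-interior}.

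\emph{Idempotency.} From the adjunction we get the triangle identities, which on posets read $R^*R_*R^* = R^*$. To see this, note that $R^*$ applied to the unit gives one inequality. The counit, applied at $R^*A$, gives the other, keeping in mind that the counit lives in $[M,\2]\op$. Applying $R_*$ to this equality gives $R_*R^*R_*R^* = R_*R^*$. This agrees with the citation \cite[2.1.11]{jarvis_2025} that the monad $R_*R^*$ is idempotent.

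This is the only step needing any care, and the care is limited to keeping the direction of the order in $[M,\2]\op$ straight. Once idempotency is in place, all three properties hold, and the contrapositive gives the proposition.
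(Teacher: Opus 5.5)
Your proof is correct and follows the paper's own justification. The paper likewise argues that $R_*R^*$ is extensive by the adjunction unit, monotone as a functor between posets, and idempotent. The only difference is that the paper cites \cite[2.1.11]{jarvis_2025} for idempotency, whereas you derive it directly: you obtain $R^*R_*R^* = R^*$ from the unit and counit by antisymmetry, correctly accounting for the order reversal in $[M,\2]\op$.
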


This result will help us quickly identify which of the asymmetric compositions later possibly could give rise to alternative formal concepts on some related formal context. However, satisfying these three properties does not, as far as we know, guarantee that the fixedpoints form the formal concept of some formal context. 

\begin{remark}
    Dually, given an adjoint pair $L\dashv R$, the composition $LR$ is a comonad. On posets, this comonad satisfies dual properties of the closure operators, called being an \emph{interior} operator. In the case of formal concepts, instead of completing a subset $A$ to a formal concept, i.e., constructing the smallest formal concept containing the set, the interior operator finds the largest formal concept \emph{inside} $A$. 
\end{remark}

\subsection{Binary compositions}

We can now classify exactly which binary compositions of the eight operators that give closure operators. 

\begin{theorem}
    Out of the 16 operators in \cref{tab:possibilistic-compositions}, only $(-)^\Delta_\Delta$ and $(-)^\Pi_N$ are closure operators. 
\end{theorem}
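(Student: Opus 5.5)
The positive half comes directly from the adjunctions already established. By \cref{lm:adjoint-Delta} we have $(-)^\Delta\dashv(-)_\Delta$, and by \cref{lm:adjoint-Pi-N} we have $(-)^\Pi\dashv(-)_N$. On posets, the composite right-after-left of an adjunction is an idempotent monad, and hence a closure operator, by the discussion preceding \cref{prop:axioms_for_closure}. Extensivity is exactly \cref{prop:closure-interior}(1),(2). So both $(-)^\Delta_\Delta$ and $(-)^\Pi_N$ are closure operators.

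For the negative half I would read ``are closure operators'' as ``are closure operators on every formal context''. For each of the remaining fourteen composites it then suffices to exhibit one small context in which one of the three axioms fails, and to invoke \cref{prop:axioms_for_closure}. I would first sort the composites by variance as maps $P(G)\to P(G)$ under inclusion. The operators $(-)^\Delta,(-)_\Delta,(-)^\nabla,(-)_\nabla$ are order-reversing, since $\nabla$ is the De Morgan dual of the antitone $\Delta$ by \cref{lm:De-Morgan-duality}. The operators $(-)^\Pi,(-)^N,(-)_\Pi,(-)_N$ are order-preserving. Hence the eight mixed composites (one factor from each group) are antitone. Each of them fails monotonicity as soon as the context makes both factors non-constant, so for this group I would use a two-object, two-attribute context such as the identity relation on $G=M=\{1,2\}$ and check that $\emptyset$ and $G$ have incomparable or reversed images.

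The remaining six monotone candidates are $(-)^\Delta_\nabla$, $(-)^\nabla_\Delta$, $(-)^\nabla_\nabla$, $(-)^\Pi_\Pi$, $(-)^N_N$ and $(-)^N_\Pi$. For $(-)^N_\Pi$ and $(-)^\nabla_\nabla$ the counits in \cref{prop:closure-interior}(3),(4) show they are interior operators, so it is enough to find a context where they are strictly deflationary:
\begin{itemize}
\item for $(-)^N_\Pi$, take the empty relation on one point; then $G^N=M$ and $M_\Pi=\emptyset$, so $G\not\le G^N_\Pi$;
\item for $(-)^\nabla_\nabla$, take the full relation on one point; then $G^\nabla=\emptyset$ and $\emptyset_\nabla=\emptyset$, so $G\not\le G^\nabla_\nabla$.
\end{itemize}
For the other four I would attack extensivity or idempotency with small relations:
\begin{itemize}
\item for $(-)^\Pi_\Pi$, an object with no attributes is lost from any $A$ containing it;
\item for $(-)^N_N$, use the De Morgan identity $(-)^N_N=\neg(-)^\Pi_\Pi\neg$ and transport the failure of $(-)^\Pi_\Pi$;
\item for $(-)^\Delta_\nabla$ and $(-)^\nabla_\Delta$, rewrite them through \cref{lm:De-Morgan-duality} as $\neg(-)^\nabla_\Delta$-type conjugates of each other, and test on the empty and full relations.
\end{itemize}

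The main obstacle is this last group, and the mixed-variance degenerate cases. Several operators become constant on tiny contexts; for instance, with the full relation on one point, $(-)^\Delta_\Pi$ is the constant $G$, which \emph{is} a closure operator. So the counterexample context has to be chosen per composite, and I would first try the two-element identity relation uniformly, falling back to ad hoc contexts where that one fails. The De Morgan conjugations halve the work, since $\neg F\neg$ swaps ``extensive'' with ``deflationary'' while preserving monotonicity and idempotency. I would therefore organise the verification in conjugate pairs and only check one member of each pair by hand.
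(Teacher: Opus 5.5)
\textbf{Gap: the De Morgan transport for $(-)^N_N$.} You say yourself that $F\mapsto\neg F\neg$ exchanges ``extensive'' with ``deflationary''. So the failure you exhibit for $(-)^\Pi_\Pi$, which is non-extensivity via an object with no attributes, becomes a failure of \emph{deflationarity} for $(-)^N_N=\neg(-)^\Pi_\Pi\neg$. That is perfectly compatible with $(-)^N_N$ being a closure operator. In fact an object $g_0$ with no attributes lies in $B_N$ for every $B$. So it is always \emph{added} by $(-)^N_N$ and can never witness its non-extensivity.

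For the same reason, your plan to ``check only one member of each conjugate pair'' is unsound. $(-)^\Delta_\Delta$ and $(-)^\nabla_\nabla$ are conjugate, and only the first is a closure operator. Similarly, with the full relation $(-)^\Delta_\nabla$ is constantly $\varnothing$, while its conjugate $(-)^\nabla_\Delta$ is constantly $G$, which \emph{is} a closure operator. Conjugation transfers only three kinds of failure: of monotonicity, of idempotency, and of deflationarity (which becomes failure of extensivity).

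\textbf{Repairs.} Witness $(-)^N_N$ directly, using an attribute shared by an object in $A$ and one outside $A$, as the paper does. With $G=\{a,b\}$, $M=\{x\}$, $aRx$, $bRx$, $A=\{a\}$ you get $A^N=\varnothing$ and $A^N_N=\varnothing$. Equivalently, transport the non-deflationarity $\{b\}^\Pi_\Pi=G$.

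$(-)^\nabla_\Delta$ likewise needs its own witness. Take the empty relation with $G=\{a,b\}$, $M=\{x\}$, $A=\{a\}$: then $A^\nabla=M$ and $A^\nabla_\Delta=\varnothing$. No witness at $A=G$ can work for this operator, since $G^\nabla=\varnothing$ and $\varnothing_\Delta=G$. This is also why the paper's own witness for $(-)^\nabla_\Delta$ ($G=\{a\}$, $M=\{x\}$, $aRx$, $A=G$) actually yields $G$, not $\varnothing$. Here your empty-relation route is the right one, provided $|G|\ge 2$ and $M\neq\varnothing$.

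\textbf{What checks out.} Your one-point witnesses for $(-)^N_\Pi$ and $(-)^\nabla_\nabla$ are correct; the paper only asserts that these are interior operators. The two-point identity context does kill all eight antitone composites, each of which sends $\varnothing\mapsto G$ and $G\mapsto\varnothing$.

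Your stated reason for that group is wrong, though: non-constant factors can compose to a constant. With $G=\{a,b\}$, $M=\{x,y\}$ and $aRx$, $aRy$, $bRx$, both $(-)^\Delta$ and $(-)_\Pi$ are non-constant, yet $(-)^\Delta_\Pi$ is constantly $G$. What you need, and what the identity context supplies, is that the composite itself is non-constant.
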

\begin{proof}
    The monads of the adjunctions $(-)^\Delta \dashv (-)_\Delta$ and $(-)^\Pi \dashv (-)_N$, i.e., the operators $(-)^\Delta_\Delta$ and $(-)^\Pi_N$ are both closure operators by \cite[2.1.11]{jarvis_2025}. Similarly, due to the reversal of the adjunctions, $(-)^\Pi_N$ and $(-)^\nabla_\nabla$ are interior operators -- not closure operators. Hence, we only need to check the remaining $12$ cases. We check these by counterexamples, using \cref{prop:axioms_for_closure} to test whether $F$ satisfies the properties. In fact, it will be enough to test whether $F$ is extensive. 

    First, let $G=\{a,b\}$, $M=\{x,y\}$ and $R$ given by $aRx, bRy$. Chose the subset $A=G$. Then we have $A^\Delta = \varnothing$, $A^\Pi = \{x,y\}$, $A^N=  \{x,y\}$ and $A^\nabla = \varnothing$. In this example, both $A^\Delta_\Pi = \varnothing$ and $A^\Delta_N = \varnothing$, showing that neither operators $(-)^\Delta_\Pi$ or $(-)^\Delta_N$ are closure operators. Similarly, we get $A^\Pi_\Delta = \varnothing$, $A^\Pi_\nabla=\varnothing$, $A^N_\Delta = \varnothing$, $A^N_\nabla = \varnothing$, $A^\Delta_\Pi = \varnothing$ and $A^\nabla_N = \varnothing$, showing that none of these can be closure operators as well. 

    It remains to show the same for $(-)^\Pi_\Pi$, $(-)^\Delta_\nabla$, $(-)^N_N$ and $(-)^\nabla_\Delta$. For the former two we, use the formal context $G=\{a\}$, $M=\varnothing$ and $A=G$. This gives $A^\Pi_\Pi = \varnothing$ and $A^\Delta_\nabla = \varnothing$, showing neither is extensive and hence not a closure operator. Similarly, using $G=\{a\}$, $M=\{x\}$, $aRx$ and $A=G$ $A^\nabla_\Delta = \varnothing$. For the final operator, $(-)^N_N$ we use the context $G=\{a,b\}$, $M=\{x\}$, $aRx$, $bRx$ and $A=\{a\}$. This gives $A^N_N = \varnothing$, which also finishes the proof. 
\end{proof}

\subsection{Other closure operations}

We round off this section by using the categorical machinery to identify a sequence of new types of concepts, or pairs, that naturally arise from the framework. We do not intend to study these in detail here, but leave this for future work. Now, given two adjunctions 
\begin{center}
\begin{tikzcd}
    \C \arrow[r, yshift=2pt, "F_1"] & \D \arrow[r, yshift=2pt, "F_2"] \arrow[l, yshift=-2pt, "G_1"] & \E \arrow[l,yshift=-2pt, "G_2"]
\end{tikzcd}
\end{center}
the composed functor $F_2 F_1$ is a left adjoint to $G_2 G_1$. Recall from \cref{prop:axioms_for_closure} that on complete lattices, closure operators are equivalent to idempotent monads. This gives the following result. 

\begin{proposition}
    The functors $(-)^\Delta$ and $(-)^\Pi$ on $P(G)$, and  similarly $(-)_\Pi$ and $(-)_\nabla$ on $P(M)\op$ are left adjoints. Any composition of these will produce a closure operator on $P(G)$ by composing with the corresponding right adjoint. 
\end{proposition}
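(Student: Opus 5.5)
We must show two things: the listed functors are left adjoints, and any composite of them, followed by the composite of the corresponding right adjoints, is a closure operator on $P(G)$. We read $P(G)$ as $[G\op,\2]$. The functors $(-)^\Delta$ and $(-)^\Pi$ go $[G\op,\2]\to[M,\2]\op$, while $(-)_\Pi$ and $(-)_\nabla$ go from the attribute side back to $[G\op,\2]$.

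\textbf{Step 1: the four left adjoints.} This step is bookkeeping. \cref{lm:adjoint-Delta} gives $(-)^\Delta\dashv(-)_\Delta$, and \cref{lm:adjoint-Pi-N} gives $(-)^\Pi\dashv(-)_N$. On the attribute side, \cref{lm:adjoint-N-Pi} gives $(-)_\Pi\dashv(-)^N$, and \cref{lm:adjoint-nabla} gives $(-)_\nabla\dashv(-)^\nabla$. Some care is needed with the direction of each adjunction, since $[M,\2]\op$ carries the reversed order. The orientation stated in those lemmas is the one we use, so every composite below is formed between the posets $[G\op,\2]$ and $[M,\2]\op$ exactly as typed there.

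\textbf{Step 2: composites of left adjoints.} Take a composite $L=L_n\cdots L_1$ of the four left adjoints that starts and ends in $[G\op,\2]$. Each $L_i$ is typed so that consecutive factors compose, so the factors alternate between an upper operator from $\{(-)^\Delta,(-)^\Pi\}$ and a lower operator from $\{(-)_\Pi,(-)_\nabla\}$. Write $R_i$ for the right adjoint of $L_i$, and set $R=R_1\cdots R_n$. By the standard fact recalled before the statement, adjunctions compose: if $L_i\dashv R_i$, then $L_{i+1}L_i\dashv R_iR_{i+1}$. On posets this is the one-line chain
\[
L_{i+1}L_iA\le B \iff L_iA\le R_{i+1}B \iff A\le R_iR_{i+1}B.
\]
Induction on $n$ then gives $L\dashv R$.

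\textbf{Step 3: the closure property.} Consider $RL\colon P(G)\to P(G)$. It is monotone, being a composite of monotone maps. It is extensive by the unit of $L\dashv R$, as in \cref{prop:closure-interior}. For idempotence, the poset triangle identities give $LRL=L$: we have $L\le LRL$ from the unit, and $LRL\le L$ from the counit applied at $LA$. Hence $RLRL=RL$, which is the content of \cite[2.1.11]{jarvis_2025}. So $RL$ is a closure operator.

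The main obstacle is the typing in Step 2, not the order theory. We must check which alternating words actually compose, and keep track of the order reversal on $[M,\2]\op$, so that the unit of each composed adjunction really lives on $P(G)$. If it lived on the opposite side we would obtain an interior operator, as happens for $(-)^N_\Pi$. Concretely, we will verify for each of the four lemmas that the left adjoint points in the direction used in the composite. Only then does Step 3 apply uniformly.
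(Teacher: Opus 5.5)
Your Steps 2 and 3 are the same argument the paper gives: adjunctions compose, and the monad $RL$ of an adjunction between posets is extensive, monotone and idempotent. As abstract order theory they are correct. The gap is the typing you take from the lemmas in Step 1 and then use in Step 2 to conclude that every alternating word composes.

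The orientation written in the lemmas for $(-)^\Pi\dashv(-)_N$ and $(-)_\Pi\dashv(-)^N$, with $[M,\2]\op$ on the attribute side, is wrong. Both $(-)^\Pi$ and $(-)_\Pi$ are \emph{monotone} for inclusion. What actually holds, and what the proof of the $(-)^\Pi\dashv(-)_N$ lemma in fact computes, is $A^\Pi\subseteq B\iff A\subseteq B_N$ and $B_\Pi\subseteq A\iff B\subseteq A^N$. These are adjunctions between $P(G)$ and $P(M)$ with the inclusion order. By contrast, $(-)^\Delta\colon P(G)\to P(M)\op$ and $(-)_\nabla\colon P(M)\op\to P(G)$ really do involve the reversed order, because they are antitone for inclusion.

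Hence, as monotone maps, $(-)^\Delta$ can only be followed by $(-)_\nabla$, and $(-)^\Pi$ only by $(-)_\Pi$. A mixed word such as $(-)_\Pi(-)^\Delta$ is antitone on $P(G)$. The operator your recipe attaches to it, $(-)_\Delta(-)^N(-)_\Pi(-)^\Delta$, is not extensive. Take $G=\{a\}$, $M=\{x\}$, $R=\varnothing$, and apply $(-)^\Delta$, $(-)_\Pi$, $(-)^N$, $(-)_\Delta$ in turn: this gives $G\mapsto\varnothing\mapsto\varnothing\mapsto\{x\}\mapsto\varnothing$. So the verification you defer to your last paragraph fails for two of the four lemmas. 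Your claim that ``the factors alternate'' freely between the two upper and the two lower operators is false.

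What survives is a restricted statement. The composable words starting from $P(G)$ are exactly the words in the two monotone endomaps $(-)_\nabla(-)^\Delta$ and $(-)_\Pi(-)^\Pi$, possibly followed by one final $(-)^\Delta$ or $(-)^\Pi$. Their right adjoints are assembled from $(-)_\Delta(-)^\nabla$ and $(-)_N(-)^N$. For these words your Steps 2 and 3 go through verbatim and give closure operators on $P(G)$.

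For this to make sense, the statement's phrase ``$(-)_\Pi$ on $P(M)\op$'' must be corrected to $P(M)$. The paper's own argument has the same blind spot. Its example $(-)^\Delta(-)_\Pi(-)^\Delta$ is a mixed word of the forbidden kind, and the same one-point context shows that the associated operator $(-)_\Delta(-)^N(-)_\Delta(-)^\Delta(-)_\Pi(-)^\Delta$ sends $G$ to $\varnothing$.
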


\begin{example}
    \label{ex:new-operator}
    The functor 
    \[(-)^\Delta (-)_\Pi (-)^\Delta \: [G\op, \2]\to [M,\2]\op\]
    is a composition of left adjoints, hence is a left adjoint itself, with right adjoint given by $(-)_\Delta (-)^N (-)_\Delta$. The composition of these is by \cite[2.1.11]{jarvis_2025} and standard categorical arguments a closure operator. 
\end{example}

\begin{proposition}
    If $C$ is the closure operator defined in \cref{ex:new-operator}, then there is no formal context $(G',M',R')$ such that $C = (-)_{\Delta_{R'}}(-)^{\Delta_{R'}}$. 
\end{proposition}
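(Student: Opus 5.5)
The plan is to prove the statement through \cref{prop:axioms_for_closure}. For that I first need to see why the closure axioms must be where the argument lives.

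By the standard representation of closure systems (see \cite[Chapter 7]{davey-priestley_2002}), every closure operator $C$ on a powerset $P(G)$ equals $(-)_{\Delta_{R'}}(-)^{\Delta_{R'}}$ for some context. One takes $G'=G$, $M'$ the set of $C$-closed subsets, and $gR'm$ iff $g\in m$. So if $C$ were genuinely extensive, monotone and idempotent, the statement would fail. The proof therefore has to show that $C$ violates one of the three axioms. Extensivity and monotonicity are automatic here. The composite $F=(-)^\Delta(-)_\Pi(-)^\Delta$ is a left adjoint with right adjoint $U=(-)_\Delta(-)^N(-)_\Delta$. Hence $C=UF$ is the monad of the composite adjunction, so $\Id\le C$, and $C$ is monotone as a functor of posets. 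The remaining axiom is idempotency, and that is where I will attack.

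The key observation is that a composite of adjunctions whose individual monads are idempotent need not have an idempotent monad. Writing $F=F_3F_2F_1$ and $U=G_1G_2G_3$, we get
\[C = G_1G_2(G_3F_3)F_2F_1 .\]
Here the inner closures $G_3F_3$ and $G_2F_2$ are interleaved with the other functors, so no cancellation forces $CC=C$. Consequently the appeal to \cite[2.1.11]{jarvis_2025} in \cref{ex:new-operator} applies to each factor separately, but not to the composite.

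The concrete step is to search for a small formal context $(G,M,R)$ and a subset $A\subseteq G$ with $C(A)\subsetneq C(C(A))$. I would start with $|G|,|M|\le 2$, for instance the context $aRx$, $bRy$ from the proof of the previous theorem, together with its variants, and compute
\[A^\Delta,\quad (A^\Delta)_\Pi,\quad ((A^\Delta)_\Pi)^\Delta,\]
and then apply $(-)_\Delta$, $(-)^N$, $(-)_\Delta$ in turn. I then repeat the whole computation on the result. Because the outer $(-)_\Delta$ lands in extents of $R$, a useful heuristic is to choose $A$ so that $C(A)$ is an extent whose $(-)^\Delta(-)_\Pi$-image is strictly larger than that of $A$; iterating then climbs further.

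Once such a witness is found, \cref{prop:axioms_for_closure} immediately gives that $C$ is not the closure operator of any formal context $(G',M',R')$. The main obstacle is this search. It is possible that degenerate small contexts make $C$ collapse to $\Id$ or to the constant $G$, both of which are idempotent. If the $2\times 2$ cases all turn out idempotent, I would move to $|G|=3$ with a non-symmetric relation, where $(-)_\Pi$ and $(-)^N$ differ substantially from $(-)_\Delta$ and $(-)^\Delta$.
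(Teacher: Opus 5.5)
Your first paragraph is correct and is the crux. Every closure operator on $P(G)$ equals $(-)_{\Delta_{R'}}(-)^{\Delta_{R'}}$ when $M'$ is the set of closed sets and $R'$ is membership, so the proposition can only hold if $C$ breaks a closure axiom. This is also why the paper's own route fails. The formula $A^\Delta_\Delta=\bigcap_{g\in A}g^\Delta_\Delta$ it relies on already fails for $aRx$, $bRy$ with $A=G$. And in the paper's context $aRy$, $bRx$ one computes $C=\mathrm{Id}$, which is the FCA closure of that very context.

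Your plan then breaks at ``extensivity and monotonicity are automatic; the remaining axiom is idempotency''. Monotonicity does hold. But if $C$ really were $UF$ for an adjunction $F\dashv U$ between posets, idempotency would be automatic too: the unit gives $F\le FUF$, the counit gives $FUF\le F$, so $FUF=F$ and $UFUF=UF$. Your observation that composites of idempotent adjunctions can have non-idempotent monads holds in general categories, not in posets, and $F_3F_2F_1\dashv G_1G_2G_3$ is again an adjunction of posets. So under your own premises $C$ would be a closure operator, and your first paragraph would refute the statement; the search has nothing to find.

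Independently, the specific witness $C(A)\subsetneq C(C(A))$ never exists. Write $C(A)=(\Phi(A^\Delta))_\Delta$ with $\Phi(B)=(((B_\Pi)^\Delta)_\Delta)^N$. Then $\Phi$ is extensive on $P(M)$, by the unit of $(-)_\Pi\dashv(-)^N$, extensivity of $(-)^\Delta_\Delta$, and monotonicity of $(-)^N$. With $B=\Phi(A^\Delta)$ this gives $B\subseteq (B_\Delta)^\Delta\subseteq\Phi((B_\Delta)^\Delta)$. Since $(-)_\Delta$ is antitone, $C(C(A))=(\Phi((B_\Delta)^\Delta))_\Delta\subseteq B_\Delta=C(A)$ for every $A$, so iteration never climbs.

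The real fix is that the premise is false. $(-)^\Delta(-)_\Pi(-)^\Delta$ is not a composite of left adjoints: $(-)^\Delta$ lands in $P(M)^{\mathrm{op}}$, while $(-)_\Pi$ is left adjoint to $(-)^N$ as a monotone map $P(M)\to P(G)$, so as a map out of $P(M)^{\mathrm{op}}$ it is a right adjoint. Hence there is no unit $\mathrm{Id}\le C$. In fact extensivity of $\Phi$ gives $C(A)\subseteq (A^\Delta)_\Delta$, and extensivity, the axiom you set aside, is exactly what fails. Take $G=\{a\}$, $M=\{x\}$, $R=\varnothing$ and $A=G$:
\begin{itemize}
\item $A^\Delta=\varnothing$ and $\varnothing_\Pi=\varnothing$;
\item $\varnothing^\Delta=\{x\}$ and $\{x\}_\Delta=\varnothing$;
\item $\varnothing^N=\{x\}$ and $\{x\}_\Delta=\varnothing$.
\end{itemize}
So $C(\{a\})=\varnothing$. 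Since every $(-)_{\Delta_{R'}}(-)^{\Delta_{R'}}$ is extensive, \cref{prop:axioms_for_closure} finishes the proof. This is the only sense in which the proposition can be proved: $C$ is not an FCA closure operator for \emph{every} context. For $aRy$, $bRx$ it is one.
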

\begin{proof}
    Any FCA closure operator can be computed poitwise via $A^\Delta_\Delta = \bigcap_{g\in A}g^\Delta_\Delta$. The operator $C$ does not always satisfy this property, as we can see by the counterexample $G=\{a,b\}$, $M=\{x,y\}$ with $aRy$, $bRx$. Here $C(G)=G$, while $C(\{a\})\cap C(\{b\})=\varnothing$. 
\end{proof}

Another nice feature of the categorical language is that one can prove the existence of other adjoints, which are not one of the eight possibilistic operators. For example, as $(-)_\Delta$ is a right Kan extension, it preserves limits. As $[M,\2]\op$ is a complete lattice, the adjoint functor theorem applies, meaning it has a \emph{left} adjoint 
\[L^\Delta\: [G\op,\2]\to [M,\2]\op.\] 
This is not one of the possibilistic operators, and does not coincide with mixing quantifiers with the relation $R$ and its complement $\oR$. The composition $(-)_\Delta L^\Delta$ is a closure operator on $P(G)$ due to the general property of being an adjoint pair on posets. 

Similarly, the functor $(-)_\nabla$ is the dual of a left Kan extension, hence preserve limits, meaning that it has a left adjoint $L^\nabla$ by the adjoint functor theorem. The composition $(-)_\nabla L^\nabla$ is, by the adjunction property, another closure operator on $P(G)$.

\section{Conclusion}

This paper shows that the eight possibilistic operators studied in \cite{dubois-dupin-prade_2007,dubois-prade_2009} arise canonically as Kan extensions of profunctors. We used this to identify some of their properties as arising from standard categorical arguments, as well as classified exactly which binary compositions, asymmetric or symmetric, which give closure operators. Finally we introduced infinitely many new closure operators on $P(G)$ by using ideas from category theory. We are not aware of any studies on these operators. 

The categorical approach opens up a new avenue of research, and a new way of understanding existing results. In \cite{djouadi_prade_2011}, the authors extend the eight possibilistic operators in formal concept analysis to its fuzzy sibling. These results can be obtained in this categorical framework by replacing the boolean category $\2$ by a fuzzy lattice $L$. More generally, by enriching in an epistemic uncertainty framework, as introduced in our previous work \cite{aambo_2026}, one can obtain a plethora of extensions to this paper into epistemically uncertain settings. 

%Another direction worth further inquiry is the geometric picture. In \cite{jarvis_2025}, Jarvis introduces a monoidal structure on the subcategory of formal concepts. As formal concepts form a reflexive subcategory, one could attempt to study these reflexive ``ideals'' in $[G\op,\2]$. In particular, one could take inspiration from Balmers work on tensor-triangulated geometry \cite{balmer_2005} to define \emph{prime} ideals, and study the corresponding spectrum object. 

\printbibliography{}  

@article{eilenberg-maclane_1945,
	author = {Samuel Eilenberg and Saunders MacLane},
	journal = {Transactions of the American Mathematical Society},
	pages = {231--294},
	title = {General Theory of Natural Equivalences},
	year = {1945},
  volume = {58}
}

@book{maclane_1998,
  author = {Saunders MacLane},
  title = {Categories for the working mathematician},
  isbn = {0-387-98403-8},
  year = {1998},
  publisher = {Springer-Verlag, New York},
  series = {Graduate Texts in Mathematics},
  number = {5},
}

@book{kelly_1982,
  author = {Gregory Maxwell Kelly},
  title = {Basic concepts of enriched category theory},
  year = {1982},
  publisher = {Cambridge University Press},
  series = {London Mathematical Society Lecture Note Series},
  number = {64},
}

@misc{aambo_2026,
      title={A categorical formalization of epistemic uncertainty frameworks}, 
      author={Torgeir Aambø},
      year={2026},
      eprint={2603.04188},
      archivePrefix={arXiv},
      primaryClass={math.CT},
      url={https://arxiv.org/abs/2603.04188}, 
}

@book{dubois-prade_1988,
  author = {Didier Dubois and Henri Prade},
  title = {Possibility Theory. An Approach to Computerized Processing of Uncertainty},
  year = {1988},
  publisher = {Plenum Press, New York and London},
}

@incollection{dubois-prade_1998,
  author = {Didier Dubois and Henri Prade},
  booktitle = {Handbook of Defeasible Reasoning and Uncertainty Management Systems},
  title = {Possibility Theory: Qualitative and Quantitative Aspects},
  year = {1998},
  volume = {1},
  publisher = {Springer, Dordrecht},
}

@incollection{dubois-prade_2015,
  author = {Didier Dubois and Henri Prade},
  booktitle = {Springer Handbook of Computational Intelligence},
  title = {Possibility Theory and Its Applications: Where Do We Stand?},
  year = {2015},
  publisher = {Springer-Verlag, Berlin, Heidelberg},
  series = {Springer Handbooks},
}

@book{ganter-wille_1999,
  author = {Bernhard Ganter and Rudolph Wille},
  title = {Formal Concept Analysis: Mathematical Foundations},
  year = {1999},
  publisher = {Springer-Verlag},
}

@article{dubois-dupin-prade_2007,
  author = {Didier Dubois and Florence Dupin de Saint-Cyr and Henri Prade},
  journal = {Fundamenta Informaticae},
  title = {A possibility-theoretic view of formal concept analysis},
  year = {2007},
  volume = {75},
  number = {1},
  pages = {195--213},
}

@inproceedings{dubois-prade_2009,
	location = {Lisbonne, Portugal},
	title = {Possibility theory and formal concept analysis in information systems},
	pages = {1021--1026},
	booktitle = {Proceedings of the Joint 2009 International Fuzzy Systems Association World Congress and 2009 European Society of Fuzzy Logic and Technology Conference},
  ISBN = {978-989-95079-6-8},
	publisher = {European Society for Fuzzy Logic and Technology ({EUSFLAT})},
	author = {Dubois, Didier and Prade, Henri},
	year = {2009},
}

@inproceedings{duntsch-gediga_2002,
	title = {Modal-style operators in qualitative data analysis},
	doi = {10.1109/ICDM.2002.1183898},
	eventtitle = {2002 {IEEE} International Conference on Data Mining},
	pages = {155--162},
	booktitle = {Proceedings of the 2002 {IEEE} International Conference on Data Mining},
	author = {Duntsch, N. and Gediga, G.},
	year = {2002},
}

@inproceedings{yao_2004,
	location = {Berlin, Heidelberg},
	title = {A Comparative Study of Formal Concept Analysis and Rough Set Theory in Data Analysis},
	isbn = {978-3-540-25929-9},
	doi = {10.1007/978-3-540-25929-9_6},
	pages = {59--68},
	booktitle = {Rough Sets and Current Trends in Computing},
	publisher = {Springer},
	author = {Yao, Yiyu},
	year = {2004},
}

@inproceedings{yao-chen_2006,
	location = {Berlin, Heidelberg},
	title = {Rough Set Approximations in Formal Concept Analysis},
	isbn = {978-3-540-39383-2},
	doi = {10.1007/11847465_14},
	pages = {285--305},
	booktitle = {Transactions on Rough Sets V},
	publisher = {Springer},
	author = {Yao, Yiyu and Chen, Yaohua},
	year = {2006},
}

@inproceedings{yakub-djouadi-dubois-prade_2016,
	location = {La Hague, Netherlands},
	title = {From a Possibility Theory View of Formal Concept Analysis to the Possibilistic Handling of Incomplete and Uncertain Contexts},
	pages = {pp. 79--88},
	number = {1703},
	booktitle = {Proceedings of the 5th International Workshop ``What can {FCA} do for Artificial Intelligence?'' ({FCA4AI}), co-located with the European Conference on Artificial Intelligence {ECAI}},
	author = {Ait-Yakoub, Zina and Djouadi, Yassine and Dubois, Didier and Prade, Henri},
  year = {2016},
}

@article{djouadi_prade_2011,
	title = {Possibility-theoretic extension of derivation operators in formal concept analysis over fuzzy lattices},
	volume = {10},
	doi = {10.1007/s10700-011-9106-5},
	pages = {287--309},
	number = {4},
	journaltitle = {Fuzzy Optimization and Decision Making},
	author = {Djouadi, Yassine and Prade, Henri},
	year = {2011},
}

@phdthesis{jarvis_2025,
	title = {A Novel Closed Monoidal Structure on the Nucleus of a Profunctor},
	url = {https://academicworks.cuny.edu/gc_etds/6231},
  type = {PhD thesis},
	author = {Jarvis, Samantha},
  year = {2025},
  institution = {The Graduate Center, City University of New York,}
}

@phdthesis{ferrer_2023,  
  author = {Lance Ferrer},
  title = {Concept Analysis in Categories},
  type = {PhD thesis},
  institution = {University of Hawaii at Manoa},
  url = {https://hdl.handle.net/10125/107917},
  year = {2023},
  }

@book{davey-priestley_2002,
  author    = {B. A. Davey and H. A. Priestley},
  title     = {Introduction to Lattices and Order},
  edition   = {2},
  publisher = {Cambridge University Press},
  year      = {2002}
}

@article{singh-kumar-gani_2016,
	title = {A comprehensive survey on formal concept analysis, its research trends and applications},
	volume = {26},
	doi = {10.1515/amcs-2016-0035},
	pages = {495--516},
	number = {2},
	journaltitle = {International Journal of Applied Mathematics and Computer Science},
	publisher = {University of Zielona Góra},
	author = {Singh, Prem Kumar and Kumar, Cherukuri Aswani and Gani, Abdullah},
	year = {2016},
}
\end{document}